\keywords{} 
\subjclass[2010]{}
\theoremstyle{plain}
\newtheorem{thm}{Theorem}[section]
\newtheorem{prop}[thm]{Proposition}
\newtheorem{cor}[thm]{Corollary}
\newtheorem{lem}[thm]{Lemma}
\newtheorem{cla}[thm]{Claim}
\theoremstyle{definition}
\newtheorem{rmk}[thm]{Remark}
\newcommand{\sO}{\mathcal{O}}
\newcommand{\sZ}{\mathcal{Z}}
\newcommand{\mD}{\mathbb{D}}
\newcommand{\mG}{\mathbb{G}}
\newcommand{\mP}{\mathbb{P}}
\newcommand{\mZ}{\mathbb{Z}}
\newcommand{\Pic}{\mathrm{Pic}\,}
\def\geq{\geqslant}
\def\leq{\leqslant}
\numberwithin{equation}{section}
\newcommand{\beba}  {\begin{equation}\begin{array}{rcl}}
\newcommand{\eaee}  {\end{array}\end{equation}}
\def\l@section{\@tocline{1}{0pt}{1pc}{}{}}
\def\l@subsection{\@tocline{2}{0pt}{1pc}{4.6em}{}}
\def\l@subsection{\@tocline{3}{0pt}{1pc}{7.6em}{}}
\renewcommand{\tocsection}[3]{%
  \indentlabel{\@ifnotempty{#2}{\makebox[2.3em][l]{%
    \ignorespaces#1 #2.\hfill}}}#3}
\renewcommand{\tocsubsection}[3]{%
  \indentlabel{\@ifnotempty{#2}{\hspace*{2.3em}\makebox[2.3em][l]{%
    \ignorespaces#1 #2.\hfill}}}#3}
\renewcommand{\tocsubsection}[3]{%
  \indentlabel{\@ifnotempty{#2}{\hspace*{4.6em}\makebox[3em][l]{%
    \ignorespaces#1 #2.\hfill}}}#3}
\title[On the variety of tritangential planes]{On the variety of tritangential planes to a general K3 surface of degree 6 and genus 4 in $\mP^4$}
\author{Ciro Ciliberto}
\address{
Ciro Ciliberto\\Dipartimento di Matematica\\
Universit\`a di Roma ``Tor Vergata''\\
Via della Ricerca  Scientifica, 00177 Roma\\ Italia
\texttt{cilibert@axp.mat.uniroma2.it   }}
\author{Sandro Verra}
\address{Sandro Verra\\Departimento di Matematica e  Fisica \\
Universit\`a Roma 3\\
Largo San Leonardo Murialdo,  00146 Roma \\ Italia
\texttt{sandro.verra@gmail.com}}
\begin{document}

\markboth{}{}
	\label{cast2}
\maketitle

\begin{abstract} {Let $S\subset \mP^4$ be a general K3 surface of degree 6 and genus 4. In this paper we study the irreducible variety $X_S$ of \emph{tritangential planes} to $S$ whose general point is a plane that intersects $S$ in a curvilinear scheme of length six supported at three non collinear points. The variety $X_S$ can be identified as the relevant part of the fixed locus of the so called \emph{Beauville involution} defined on the Hilbert scheme $S[3]$ of 0--dimensional schemes of length three of $S$. In this paper we prove that: (a) $X_S$ has dimension 3,  is irreducible and smooth, except for 210 points that are at most of multiplicity 2  for $X_S$;  (b) $X_S$, in its natural embedding in the Grassmannian $\mathbb G(2,4)\subset \mP^9$ of planes in $\mP^4$, has degree $152$.} \end{abstract}

\tableofcontents

\section{Introduction}

Let $S\subset \mathbb P^g$ be a smooth, irreducible, linearly normal, complex K3 surface of degree $2g-2$ and genus $g$. 
In this paper we will  assume that $S$ is \emph{general},  so that in particular $\Pic(S)=\mZ\langle \sO_S(1)\rangle$.  We will denote by $S[h]$ the Hilbert scheme of $0$--dimensional subschemes of length $h$ of $S$, that is a smooth, irreducible, projective  variety of dimension $2h$. 

There is a birational involution $\beta_S: S[g-1]\dasharrow S[g-1]$, called the \emph{Beauville involution}, defined in the following way. It maps the general point $Z\in S[g-1]$ to the scheme $Z'\in  S[g-1]$ such that the union of $Z$ and $Z'$ is the scheme theoretic intersection of $S$ with the linear subspace of dimension $g-2$ spanned by $Z$. The indeterminacy locus ${\rm Ind}(\beta_S)$ of $\beta_S$ consists of the set of those schemes $Z\in S[g-1]$ such that the span of $Z$ has dimension smaller than $g-2$.

There is also a rational map ${\rm sp}_S: S[g-1]\dasharrow \mG(g-2,g)$ (where $\mG(g-2,g)$ is the Grassmannian of $(g-2)$--dimensional linear subspaces of $\mP^g$) that maps the general $Z\in S[g-1]$ to the linear subspace of dimension $g-2$ spanned by $Z$. The map ${\rm sp}_S$ is dominant, has degree ${2g-2}\choose {g-2}$, and it has the same indeterminacy locus ${\rm Ind}(\beta_S)$ as $\beta_S$.
Of course ${\rm sp}_S(Z)={\rm sp}_S(\beta_S(Z))$, whenever $Z\in S[g-1]\setminus {\rm Ind}(\beta_S)$. 

The involution $\beta_S$ has been widely studied in the case $g=3$, in which the indeterminacy locus is empty, mostly in relation with the locus of its fixed points, that  identifies with the set of \emph{bitangent lines} to a  general quartic surface $S\subset \mP^3$ (see, e.g., \cite {CZ2, CZ3, W}).
By contrast little is known about the  involution $\beta_S$ for $g>3$. Here we will focus on the case $g=4$, in which $S$ is a general complete intersection surface of a smooth quadric $\mathbb Q$ and a smooth cubic $\mathbb F$  hypersurfaces in $\mP^4$ (note that $\mathbb Q$ is uniquely determined by $S$, whereas $\mathbb F$ is not). 

In this case the indeterminacy set ${\rm Ind}(\beta_S)$ of $\beta_S$ (and of ${\rm sp}_S$) is the set of schemes $Z\in S[3]$ such that $\langle Z\rangle$ is a line: we call them \emph{triples of collinear points}. The set ${\rm Ind}(\beta_S)$ is easy to identify. Indeed $Z\in {\rm Ind}(\beta_S)$ if and only if the line $\langle Z\rangle$ is contained in the unique quadric $\mathbb Q$ containing $S$, so that there is a natural  isomorphism   between ${\rm Ind}(\beta_S)$ and the Hilbert scheme of lines in $\mathbb Q$, that is isomorphic to $\mP^3$. 

To resolve the indeterminacies of $\beta_S$ (and of ${\rm sp}_S$) one must blow--up ${\rm Ind}(\beta_S)$ in $S[3]$. Let us consider $p_S: \widetilde {S[3]} \to S[3]$ this blow-up and let $E_S$ be the exceptional divisor that sits over ${\rm Ind}(\beta_S)$. Then $E_S$, that has dimension 5, is isomorphic to the set of pairs $(Z,\pi)$, with $Z\in {\rm Ind}(\beta_S)$ and $\pi$ a plane containing the line $\langle Z\rangle$. 

The map ${\rm sp}_S$ extends to a morphism $\widetilde {{\rm sp}_S}: \widetilde {S[3]}\longrightarrow \mG(2,4)$ that acts as ${\rm sp}_S$ off $E_S$ and on $E_S$ it acts by sending a pair $(Z,\pi)\in E_S$ to the plane $\pi$.

The map $\beta_S$, in turn, extends to an involution morphism $\widetilde{\beta_S}: \widetilde {S[3]} \longrightarrow \widetilde {S[3]}$, that coincides with $\beta_S$ off $E_S$ and on $E_S$ works by mapping a pair $(Z,\pi)\in E_S$ to the pair $(Z',\pi)\in E_S$ such that the union of $Z$ and $Z'$ is the intersection scheme of $S$ and $\pi$, so that $\widetilde{\beta_S}$ induces an involution on $E_S$. 

In this paper we study the variety of fixed points of $\widetilde{\beta_S}$. This variety consists of two disjoint irreducible components (see \S \ref {sec:Beau} and \S \ref {ssec:irred}). One component is the set $Y_S$ of fixed points of the restriction of $\widetilde{\beta_S}$ on $E_S$, which can be easily described (see \S \ref {sec:Beau}). The other component $X_S$, which is more important to us, has the property that any point $Z\in X_S$ is such that $Z$ spans a plane and the general point $Z$ of $X_S$ consists of three distinct points and is such that the plane $\langle Z\rangle$ intersects $S$ in a curvilinear scheme of length six supported on $Z$. We call such a plane a \emph{3--tangential plane} to $S$ at $Z$, and we will identify $X_S$ with its image via ${\rm sp}_S$, that is injective on $X_S$. Hence $X_S$ can be seen as  the closure in the Grassmannian $\mathbb G(2,4)$ of the set of tritangential planes to $S$, and therefore it is called the \emph{variety of tritangential planes} to $S$. This is the main character in this paper. 

In \S\S \ref {ssec:local} and \ref {ssec:special} we prove that $X_S$ is smooth, except may be for the points of a 0--dimensional scheme of length 210, corresponding to the so called \emph{special planes} for $S$, i.e., those tangent planes to $S$ that intersect $S$ uniquely at their tangency point with $S$. These points have multiplicity at most 2 for $X_S$.  In order to compute the number 210 we make in Proposition \ref {prop:exp} a more general computation, i.e., given a smooth, irreducible, projective surface in $\mP^4$, we compute the (expected) number of hyperplane sections of the surface with a triple point. 

In \S \ref {sec:degre} we compute the degree, equal to 152, of $X_S$ considered as a subvariety of $\mathbb G(2,4)\subset \mP^9$. The main point of this computation is the computation of the degree of the scroll of (proper) tritangent lines to a general projection in $\mP^3$ of a general K3 surface of degree 6 and genus 4 in $\mP^4$.

We suspect that the variety $X_S$ of tritangential planes a general K3 surface of degree 6 and genus 4 in $\mP^4$ is of general type, but we have been unable to prove this. 

It would be highly desirable to have a description as the one we make here for the fixed locus of $\beta_S$ when $S$ is a general K3 surface of degree 6 and genus 4 in $\mP^4$, also, more generally, for a general K3 surface $S$ of degree $2g-2$ and genus g in $\mP^g$, for $g\geq 5$. This is however much harder. First of all it is in general  more intricate the study of the indeterminacy locus ${\rm Ind}(\beta_S)$ of $\beta_S$. The analogue of the variety of tritangential planes in the case of genus 4, is the closure $X_S$ of the locus of \emph{$(g-1)$--tangential} linear spaces of dimension $g-2$ to $S$, i.e., of linear spaces of dimension  $g-2$ that intersect $S$ in a curvilinear scheme of degree $2g-2$ supported on a scheme $Z$ consisting of $g-1$ distinct points. It is very likely that this $X_S$ is also irreducible and generically smooth, but studying its singular locus is definitely more complicated. Moreover the computation of the degree of $X_S$ seems  in general very hard, as well as the study of its birational structure, though we believe that $X_S$ should always be of general type.  We hope to come back on these topics in the future. \medskip

\noindent {\bf Aknowledgement}:  The authors are members of GNSAGA of the Istituto Nazionale di Alta Matematica ``F. Severi''. \color{black}
\medskip

\section{The fixed locus of the Beauville involution for $g=4$}\label{sec:Beau}

Let $S\subset \mP^4$ be a general K3 surface of degree 6 and genus 4.  We want to study the set of fixed points of the  Beauville  involution $\widetilde{\beta_S}: \widetilde {S[3]} \longrightarrow \widetilde {S[3]}$. First of all there is a set of fixed points on $E_S$. Precisely, $(Z,\pi)\in E_S$ is a fixed point for $\widetilde{\beta_S}$ if and only if the plane $\pi$ is tangent along a line to the unique quadric  $\mathbb Q$ containing $S$, i.e., it intersects the quadric $\mathbb Q$ along a line with multiplicity 2. For each line $\ell\subset \mathbb Q$ there is a unique plane $\pi_\ell$ that is tangent to $\mathbb Q$ along $\ell$, namely the intersection of all hyperplanes tangent to $\mathbb Q$ at points of $\ell$, hyperplanes that form a pencil with base locus the plane in question. Hence the set $Y_S$  of fixed points of $\widetilde{\beta_S}$  along $E_S$ is naturally isomorphic to the Hilbert scheme of lines in $\mathbb Q$, that is in turn isomorphic to $\mP^3$. 

As mentioned in the Introduction, the remaining set of fixed points is denoted by $X_S$.

\begin{prop}\label{prop:noint} In the above set up one has $X_S\cap Y_S=\emptyset$.
\end{prop}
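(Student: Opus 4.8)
The plan is to argue by contradiction, assuming there is a point lying in both components. A point of $Y_S$ corresponds to a pair $(Z,\pi_\ell)$ where $\ell\subset\mathbb Q$ is a line, $Z\in{\rm Ind}(\beta_S)$ is a collinear triple with $\langle Z\rangle=\ell$, and $\pi_\ell$ is the unique plane tangent to $\mathbb Q$ along $\ell$; such a point lies on the exceptional divisor $E_S$. On the other hand, every point of $X_S$ has the property that the underlying scheme $Z$ spans a plane, hence does \emph{not} lie in ${\rm Ind}(\beta_S)$, so it lies off $E_S$. Therefore, as sets of points of $\widetilde{S[3]}$, the two loci $X_S$ and $Y_S$ are already disjoint because one sits inside $E_S$ and the other sits in its complement. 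The only subtlety is that $X_S$ is defined as a \emph{closure}: one must check that no limit point of the locus of genuine tritangential planes can fall onto $E_S$, i.e. can have collinear underlying scheme.

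The key step is thus to analyze the limits. First I would recall that via $\widetilde{{\rm sp}_S}$, which is a morphism, $X_S$ maps to $\mathbb G(2,4)$ and $Y_S$ maps to the plane $\{\pi_\ell : \ell\subset\mathbb Q\}$, which is the locus of planes tangent to $\mathbb Q$ along a line. A point of $X_S$ over such a plane $\pi_\ell$ would force $\pi_\ell\cap S$ to be a length-six curvilinear scheme; but $\pi_\ell\cap\mathbb Q = 2\ell$ (a double line), so $\pi_\ell\cap S = \pi_\ell\cap\mathbb Q\cap\mathbb F = 2\ell\cap\mathbb F$ is supported on $\ell\cap\mathbb F$, which consists of three points on the line $\ell$, each appearing with multiplicity two in a non-curvilinear (non-reduced-along-a-line) fashion — in particular $\pi_\ell\cap S$ contains the double-point scheme structure transverse to $\ell$ and cannot be curvilinear of length six supported on three points in the way required, nor can it arise as a flat limit of reduced tritangent configurations while staying inside $\pi_\ell$. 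I would make this precise by a local computation: choose affine coordinates so $\ell$ is a coordinate line, $\mathbb Q$ is given by a quadratic form vanishing to order two along $\ell$, and check that the scheme $2\ell\cap\mathbb F$ is not curvilinear at its three points.

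Alternatively — and this is probably the cleaner route — I would avoid limits entirely by using the description of fixed points of $\widetilde{\beta_S}$ directly: a fixed point on $E_S$ is a pair $(Z,\pi)$ with $\pi$ tangent to $\mathbb Q$ along a line, whereas a fixed point of $X_S$-type is a scheme $Z$ (off $E_S$) with $\langle Z\rangle$ a plane $\pi$ meeting $S$ in a curvilinear length-six scheme; since $\widetilde{\beta_S}$ preserves $E_S$ and its complement, the fixed locus decomposes as its intersection with $E_S$ (which is $Y_S$) plus its part off $E_S$, and the closure $X_S$ of the latter part meets $E_S$ only in points that are \emph{both} limits of off-$E_S$ fixed points and fixed points on $E_S$ — and the computation of the previous paragraph shows there are none, because the plane of such a limit would be a $\pi_\ell$, contradicting curvilinearity of $\pi_\ell\cap S$. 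I expect the main obstacle to be precisely this local curvilinearity check at the three points of $\ell\cap\mathbb F$: one must verify that the scheme $2\ell\cap\mathbb F$, which is the candidate limit of $\pi\cap S$, fails to be curvilinear (its embedding dimension at each of the three points is $2$, not $1$), so that no point of $X_S$ can lie over a plane in the image of $Y_S$; once this is in hand, $X_S\cap Y_S=\emptyset$ follows immediately.
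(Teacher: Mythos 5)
Your reduction of the statement to a claim about limits is correct: as subsets of $\widetilde{S[3]}$ the locus of genuine tritangential triples and the locus of fixed pairs $(Z,\pi_\ell)$ on $E_S$ are trivially disjoint, and the whole content of the proposition is that no flat limit of tritangential configurations lands on $E_S$. But the local computation you propose to rule this out does not work, and in fact gives the opposite of what you claim. Write $\ell=V(y)$ in affine coordinates on $\pi_\ell$ and let the cubic $C=\mathbb F\cap\pi_\ell$ meet $\ell$ transversally at a point $p$, so that locally $C=V(x+\dots)$. Then the local ring of $\pi_\ell\cap S=2\ell\cap\mathbb F$ at $p$ is $\mathbb C[x,y]/(y^2,\,x+\dots)\cong\mathbb C[y]/(y^2)$: a curvilinear length--$2$ scheme of embedding dimension $1$, not $2$. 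Hence for general $\ell\subset\mathbb Q$ the scheme $\pi_\ell\cap S$ is a curvilinear length--six scheme supported at the three (collinear) points of $\ell\cap\mathbb F$, locally indistinguishable from a tritangential configuration --- which is precisely why $Y_S$ is a component of the fixed locus in the first place. So there is no local obstruction. Moreover, even if the limit scheme had been non--curvilinear, that alone would not have produced a contradiction: curvilinearity is an open, not a closed, condition in flat families, and indeed the paper's own special planes in $X_S$ carry non--curvilinear triples $Z$ with ideal $\mathfrak m_p^2$ and are nonetheless limits of honest tritangential configurations.

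The obstruction is global, not local, and the paper detects it with theta--characteristics. Given a flat family $Z_t$ of tritangential triples degenerating to a collinear $Z_0$ supported on $\ell$, one cuts $S$ with a general hyperplane $H_t\supset\pi_t$ to obtain smooth canonical curves $C_t$ of genus $4$; then $\sO_{C_t}(Z_t)$ is a theta--characteristic for every $t$, odd for $t\neq0$ because $Z_t$ spans a plane (so $h^0=1$ by geometric Riemann--Roch), but even at $t=0$ because $Z_0$ spans a line (so $h^0=2$). This contradicts the constancy of the parity of theta--characteristics in flat families. Some argument of this global nature is needed; as written, your approach cannot be completed.
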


\begin{proof} Suppose, by contradiction, that $X_S\cap Y_S\neq \emptyset$ and let $(Z,\pi)\in X_S\cap Y_S$. We can then find a flat family $\sZ\to \mD$, where $\mD$ is a disc, such that for $t\in \mD\setminus \{0\}$ the fibre of $\sZ\to \mD$ over $t$ is a scheme $Z_t\in S[3]$ such that $\pi_t:=\langle Z_t\rangle$ is a 3--tangential plane to $S$ at $Z_t$, whereas  the fibre of $\sZ\to \mD$ over $0$ is $Z_0=Z$, and the flat limit of $\pi_t$, when $t$ tends to $0$, is $\pi_0=\pi$.  

Fix a general point $p\in \mP^4$ and consider for all $t\in \mD$, the hyperplane $H_t:=\langle p,\pi_t\rangle$. Let $C_t$ be the curve cut out by $H_t$ on $S$. By the generality of $p$ we may assume that the curve $C_0$ is smooth and therefore, by may be shrinking $\mD$, we may assume that $C_t$ is smooth for all $t\in \mD$, so that $C_t$ is a canonical curve of genus 4   for all $t\in \mD$. Now for all $t\in \mD$, $\sO_{C_t}(Z_t)$ is a theta--characteristic on $C_t$. For $t\neq 0$, $\sO_{C_t}(Z_t)$ is an odd theta--characteristic, whereas $\sO_{C_0}(Z_0)$ is an even theta--characteristic, and this is impossible by the constancy of parity of theta--characteristics in flat families of curves (see \cite [Appendix B, \S 33] {ACGH}). This proves the assertion. \end{proof}

By Proposition \ref {prop:noint},  any point $Z\in X_S$ is such that $Z$ spans a plane, and the map  ${\rm sp}_S$ is injective on $X_S$. Thus, in the rest of this paper, we will identify
$X_S$  with its image in $\mG(2,4)$. 

\section{Irreducibility and local behaviour}

\subsection{Irreducibility}\label{ssec:irred} 
Let $\mathcal M_g$ be the moduli space of curves of genus $g$ and consider the variety $\mathcal S^-_g $ of all pairs $(C,\theta)$ where $C$ is a smooth irreducible curve of genus $g$ and $\theta$ an odd theta--characteristic on $C$. We have the well known forgetful \'etale morphism $\mathcal S^-_g \longrightarrow \mathcal M_g$ of degree $ 2^{ g - 1} ( 2 ^g - 1 )$ and therefore all irreducible components of $\mathcal S^-_g $ have dimension $3g-3$. We will need the following result:

\begin{thm}[See \cite {Co}, Lemma (6.3)] \label{thm:irre} The variety $\mathcal S^-_g $ is irreducible for all $g\geq 2$. In particular the monodromy of the covering $\mathcal S^-_g \longrightarrow \mathcal M_g$ is transitive for $g\geq 2$. 
\end{thm}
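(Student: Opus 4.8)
The plan is to reduce the irreducibility of $\mathcal S^-_g$ to a monodromy computation performed on a single, very explicit, degenerate curve, exactly as in the cited reference. Since the forgetful map $\mathcal S^-_g \to \mathcal M_g$ is a finite \'etale cover of the irreducible variety $\mathcal M_g$, the total space $\mathcal S^-_g$ is irreducible if and only if the monodromy group of this cover acts transitively on a fibre, i.e. on the set of odd theta-characteristics of a fixed smooth curve $C$ of genus $g$; the two statements in the theorem are therefore literally equivalent, and it suffices to prove transitivity. To access the monodromy I would not work over a generic point of $\mathcal M_g$ but instead degenerate: choose a one-parameter family of curves whose general member is smooth of genus $g$ and whose special fibre is a carefully chosen stable curve, so that the monodromy picks up the Picard--Lefschetz transformations associated with the vanishing cycles, and these generate a large subgroup of the relevant symplectic/orthogonal group acting on $H^1(C,\mZ/2)$.

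The key steps, in order, are as follows. First I would recall that odd theta-characteristics on $C$ form a torsor-like set inside the affine space of all theta-characteristics over $H^1(C,\mZ/2)$, and that the parity (the Arf invariant of the associated quadratic form $q_\theta$) is preserved in families; hence the monodromy of $\mathcal S^-_g \to \mathcal M_g$ factors through the subgroup of $\mathrm{Sp}(2g,\mZ/2)$ preserving a fixed odd quadratic form, and transitivity on odd theta-characteristics is equivalent to transitivity of this image on the set of odd quadratic forms refining the symplectic form. Second, I would set up an induction on $g$: the base cases $g=2$ (and perhaps $g=3$) can be checked by hand, using the classical identification of odd theta-characteristics with Weierstrass points / bitangents and the known transitivity of the monodromy there. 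Third, for the inductive step I would degenerate a genus-$g$ curve to a genus-$(g-1)$ curve with a node (equivalently, glue a handle), compute the local monodromy around that degeneration as a Picard--Lefschetz transvection on the new vanishing cycle, and combine it with the inductive hypothesis on the genus-$(g-1)$ part to show the monodromy orbit of a chosen odd theta-characteristic is everything. One must be slightly careful about how theta-characteristics and their parity behave under the normalization/degeneration (a theta-characteristic on the nodal curve corresponds to a pair of theta-characteristics on the two branches near the node, with a parity bookkeeping rule), so the reduction to lower genus has to track the Arf invariant correctly.

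The main obstacle I expect is precisely this bookkeeping across the degeneration: one needs that the transvections produced by the finitely many vanishing cycles one can realize in a degeneration of genus-$g$ curves actually generate a group acting transitively on the odd quadratic forms, and this requires choosing the degeneration (or a chain of them) so that the vanishing cycles span $H^1(C,\mZ/2)$ and are in "general enough position" — a statement that is clean for the full symplectic group but needs the extra argument (a lemma of the type: transvections in a spanning set of cycles act transitively on quadratic forms of a fixed Arf invariant) to descend to the orthogonal setting. Rather than reprove all of this, I would invoke Theorem \ref{thm:irre} as stated, citing \cite{Co}, Lemma (6.3); for the purposes of the present paper only the case at hand ($g=4$, transitivity of the monodromy on the $2^{3}(2^{4}-1)=120$ odd theta-characteristics of a canonical genus-$4$ curve) is used, and this follows directly from the quoted statement.
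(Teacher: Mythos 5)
Your proposal ends, as the paper does, by simply invoking \cite{Co}, Lemma (6.3), so it takes essentially the same route: the paper states this result as a quoted theorem with no proof of its own, and your preliminary sketch (reducing irreducibility of the \'etale cover $\mathcal S^-_g \to \mathcal M_g$ to transitivity of the monodromy on odd theta--characteristics, then computing that monodromy via degenerations and the action on quadratic forms of fixed Arf invariant) is an accurate outline of the standard argument behind the cited lemma. This is consistent with how the result is used in the paper, where only the $g=4$ case and the equivalence ``irreducible $\Leftrightarrow$ transitive monodromy'' are needed.
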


 Next we keep the set up of  \S \ref {sec:Beau}.

\begin{thm} \label{thm:irr} If $S$ is a general K3 surface of degree 6 and genus 4 in $\mP^4$, then  $X_S$ is irreducible of dimension 3.
\end{thm}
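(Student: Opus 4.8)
The plan is to exhibit $X_S$ as (the closure of) the image of an irreducible parameter space, using the theta-characteristic picture already introduced in the proof of Proposition \ref{prop:noint}. Concretely, a general point $Z\in X_S$ determines a plane $\pi=\langle Z\rangle$; choosing a general point $p\in\mP^4$ and setting $H=\langle p,\pi\rangle$ cuts out a smooth canonical curve $C=C_H$ of genus $4$ on $S$, and $\mathcal O_C(Z)$ is an odd theta-characteristic $\theta$ on $C$ with $h^0(C,\theta)=2$ (the plane $\pi$ being exactly the span of the divisor $|\theta|$). Conversely, the net of hyperplane sections of $S$ through a fixed general $p$ is an open subset $U$ of a $\mP^3$, and over it sits the universal curve $\mathcal C\to U$ whose fibres are smooth genus-$4$ canonical curves; pulling back the étale cover $\mathcal S^-_4\to\mathcal M_4$ along the classifying map $U\to\mathcal M_4$ produces an étale cover $\widetilde U\to U$ of degree $2^3(2^4-1)=120$ parametrizing pairs $(H,\theta)$. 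The assignment $(H,\theta)\mapsto \langle |\theta|\rangle$ is a morphism $\widetilde U\to \mG(2,4)$ whose image is a dense subset of $X_S$ (here I should check that the span of $|\theta|$ is indeed a plane for generic $(H,\theta)$, i.e.\ that the three points of a general odd theta-characteristic of a canonical genus-$4$ curve are not collinear — this is where I would use that $\Pic(S)=\mathbb Z\langle\mathcal O_S(1)\rangle$, so that $S$ contains no lines and a general hyperplane section is not trigonal in a way forcing collinearity; alternatively this follows from Proposition \ref{prop:noint} run in reverse). Hence $X_S$ is irreducible as soon as $\widetilde U$ is, and $\widetilde U$ is irreducible because the classifying map $U\to\mathcal M_4$ is dominant (a general genus-$4$ curve is a canonical curve on such an $S$) and the monodromy of $\mathcal S^-_4\to\mathcal M_4$ is transitive by Theorem \ref{thm:irre}.

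The dimension count is then straightforward: $\widetilde U$ is étale over the $3$-dimensional $U$, so $\dim\widetilde U=3$, and the map $\widetilde U\to X_S$ is generically injective — given a general tritangential plane $\pi$, the curve $C=C_H$ and the theta-characteristic $\mathcal O_C(Z)$ are recovered from $(p,\pi)$, and since $p$ was fixed once and for all, distinct general points of $X_S$ come from distinct pairs $(H,\theta)\in\widetilde U$. (One does need to observe that a fixed general $p$ together with a general plane $\pi\subset\mP^4$ does span a hyperplane, i.e.\ $p\notin\pi$, which is automatic.) Therefore $\dim X_S=3$. One should also remark that a priori $X_S$ might be cut out, in this description, as only a component of the fixed locus away from $E_S$; but the argument shows the whole $X_S$ is dominated by the irreducible $\widetilde U$, so no spurious components appear.

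The main obstacle is the verification that the construction really sweeps out all of $X_S$ and not just one irreducible piece of it — equivalently, that every irreducible component of the fixed locus of $\widetilde{\beta_S}$ off $E_S$ has a general member $Z$ spanning a plane whose residual intersection with $S$ is again $Z$, so that the theta-characteristic attached to a general hyperplane through it is odd and the above reconstruction applies. This is essentially the content of the informal description of $X_S$ given in the Introduction and in \S\ref{sec:Beau}, but to make the proof self-contained I would argue as follows: if $Z$ is a fixed point of $\widetilde{\beta_S}$ not in $E_S$, then $\langle Z\rangle$ has dimension $2$ and the scheme $S\cap\langle Z\rangle$ has length $6$ and equals $2Z$ as a divisor class consideration on a general hyperplane section forces; then $\mathcal O_{C_H}(Z)$ is a square root of $\mathcal O_{C_H}(Z)\otimes\mathcal O_{C_H}(Z)=\mathcal O_{C_H}(S\cap\langle Z\rangle)=\omega_{C_H}$ (the last by adjunction, since $\langle Z\rangle\subset H$ cuts a $g^1$-type divisor), hence a theta-characteristic; a parity/semicontinuity argument as in Proposition \ref{prop:noint}, together with the fact that the general member has $h^0=2$, pins down the parity as odd on each component. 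Once this is in place, transitivity of the monodromy from Theorem \ref{thm:irre} does the rest, and irreducibility follows.
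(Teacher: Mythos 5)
Your proposal follows essentially the same route as the paper: both reduce the irreducibility of $X_S$ to the transitivity of the monodromy of the $120$--sheeted cover of odd theta--characteristics, via Theorem \ref{thm:irre}. The only structural difference is that the paper works with the full incidence variety $\mathcal X_S=\{(\pi,H):\pi\subset H\}$, which is a $\mP^1$--bundle over $X_S$ and finite of degree $120$ over $(\mP^4)^\vee$, while you fix a general point $p$ and use the $3$--dimensional net of hyperplanes through $p$, obtaining a generically injective parametrization of $X_S$ directly. This is a legitimate variant and gives the dimension count in one step rather than via the $\mP^1$--fibration.

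There are, however, two concrete misstatements. First, the classifying map $U\to\mathcal M_4$ cannot be dominant: $U$ is $3$--dimensional and $\mathcal M_4$ is $9$--dimensional. What is true (and what the paper also invokes, in the phrase ``the general hyperplane section of $S$ is a general curve of genus $4$'') is that the family of hyperplane sections over \emph{all} triples $(S,p,H)$ dominates $\mathcal M_4$ with irreducible fibres; deducing transitivity of the monodromy of $\widetilde U\to U$ for a \emph{fixed} general $(S,p)$ then requires the standard specialization argument, not literal dominance of $U$. This is the same level of informality as the paper's own proof, but your sentence as written is false. Second, an odd theta--characteristic on a general curve of genus $4$ has $h^0=1$, not $h^0=2$: a degree--$3$ theta--characteristic with $h^0=2$ would be a $g^1_3$, hence \emph{even}, and on a general genus--$4$ canonical curve the two $g^1_3$'s (cut by the rulings of the quadric) are not theta--characteristics at all, since their sum is $K_C$ but neither is half of it unless the quadric is a cone. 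In particular your closing claim that ``the general member has $h^0=2$'' and that this ``pins down the parity as odd'' is backwards: $h^0=2$ would force even parity. The correct statement --- $h^0=1$, hence odd, with $Z$ the unique effective divisor in $|\theta|$ and $\pi=\langle Z\rangle$ --- is exactly what makes your assignment $(H,\theta)\mapsto\langle Z\rangle$ well defined, so the slip should be corrected but does not derail the argument.
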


\begin{proof} Let us consider the variety $\mathcal X_S$ of all pairs $(\pi,H)\in X_S\times (\mP^4)^\vee$ such that  $\pi\subset H$. We have the obvious projection
$$
p_2: (\pi,H)\in \mathcal X_S\longrightarrow H\in (\mP^4)^\vee.
$$
This is a finite morphism of degree 120, and this proves that $ \mathcal X_S$ has dimension 4. Look now at the other projection
$$
p_1: (\pi,H)\in \mathcal X_S\longrightarrow \pi\in X_S.
$$
All fibres of $p_1$ are isomorphic to $\mP^1$. This shows that $X_S$ has dimension 3. 

To prove irreducibility, it suffices to show that $\mathcal X_S$ is irreducible. To see this, notice that, since $S$ is general, the general hyperplane section of $S$ is a general curve of genus $4$. From Theorem \ref {thm:irre} it follows that the monodromy of $p_2$ is transitive, and this implies the irreducibility of $\mathcal X_S$, as wanted.  
\end{proof}

\subsection{Regular and special planes} Let $S$ be a general K3 surface of degree 6 and genus 4 in $\mP^4$, and let $\pi\in X_S$. Then $\pi$ comes equipped with a $0$--dimensional length 3 scheme $Z$ such that the $0$--dimensional length 6 intersection scheme $Z'$ of $\pi$ and $S$ has the same support as $Z$ and at every point of $Z$ has lenght at least 2.  If $Z$ is curvilinear, we will say that $\pi$ is \emph{regular} for $S$. In this case the general hyperplane containing $\pi$ cuts out on $S$, by Bertini's theorem,  a smooth curve $C$, and the base point locus of the pencil of curves cut out on $S$ by the pencil $\mathcal H_\pi$ of hyperplanes containing $\pi$ has base locus scheme the curvilinear 
$0$--dimensional length 6 scheme $Z'$ that has the same support as $Z$. 

 The plane  $\pi\in X_S$ is called \emph{special} for $S$ if it is not regular.  This is the case if and only if the $0$--dimensional length 3 scheme $Z$ is not curvilinear,  hence there exists an intersection point $p$ of $\pi$ with $S$ such that the defining ideal of $Z$ on $\pi$ is $\mathfrak m_p^2$, where $\mathfrak m_p$ is the maximal ideal of $p$ on $\pi$. In particular $\pi$ is tangent to $S$ at $p$,  the $0$--dimensional length 6 intersection scheme $Z'$ of $\pi$ and $S$ is supported at $p$ and contains $Z$, and $\pi$ intersects $S$ only at $p$ with intersection multiplicity 6. 

\begin{prop}\label{prop:spec} Let $S$ be a general K3 surface of degree 6 and genus 4 in $\mP^4$. Then a plane $\pi$ of $\mP^4$ is special for $S$ if and only if there is a point  $p\in S$ such that $\pi$ is tangent to $S$ at $p$ and there is  a hyperplane containing $\pi$ that cuts out on $S$ a curve with a triple point at $p$. \end{prop}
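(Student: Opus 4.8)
The plan is to unwind both implications directly from the local description of special planes given just before the statement. Recall that $\pi \in X_S$ is special exactly when the length-$3$ scheme $Z$ it carries fails to be curvilinear, which forces the existence of a point $p$ with $\mathcal I_{Z,\pi} = \mathfrak m_p^2$; in particular $\pi$ is tangent to $S$ at $p$, and the length-$6$ scheme $Z' = \pi \cap S$ is supported at $p$ and contains $Z$. So for the ``only if'' direction, the data of $p$ and the tangency of $\pi$ at $p$ are already in hand, and the only thing to produce is a hyperplane $H \supset \pi$ whose trace on $S$ has a triple point at $p$. First I would pick local analytic coordinates $(x,y)$ on $S$ near $p$ so that $\pi$ corresponds, say, to the $2$-jet data: since $Z'$ is the scheme-theoretic intersection $\pi \cap S$ supported at $p$ with length $6$ and contains the curvilinear-free $Z$ with ideal $\mathfrak m_p^2$ on $\pi$, the curve $C_H = H \cap S$ for $H \supset \pi$ contains $Z'$ and hence has multiplicity $\ge 2$ at $p$; I must rule out that every such $H$ gives multiplicity exactly $2$. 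The key observation is that $H$ varies in the pencil $\mathcal H_\pi$, a $\mathbb P^1$ of hyperplanes through $\pi$, while ``$C_H$ has a triple point at $p$'' is one further linear condition on the $2$-jet of the defining equation of $C_H$ at $p$ (the vanishing of the quadratic term in the local expansion). Since $Z' \subset C_H$ already kills the quadratic part along the tangent direction of $\pi$, this is genuinely at most one condition on the pencil, so it is satisfied by at least one member $H_0$; showing it is satisfied by \emph{some} member rather than being vacuously impossible is where one uses that $Z'$ has length $6$ rather than $5$, i.e., that the intersection of $\pi$ with $S$ at $p$ is ``fatter'' than a generic tangent plane — precisely the non-curvilinearity of $Z$.

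For the converse, suppose $p \in S$, $\pi$ is tangent to $S$ at $p$, and some $H \supset \pi$ cuts out on $S$ a curve $C_H$ with a triple point at $p$. I would argue that these hypotheses force the length-$3$ scheme attached to $\pi$ (the one defining membership of $\pi$ in $X_S$) to be non-curvilinear. The natural route: $C_H$ has a triple point at $p$, so its local intersection multiplicity with the general line of $\pi$ through $p$ is $\ge 3$, while the tangency of $\pi$ at $p$ already pins the tangent cone of $C_H$ to contain the tangent line of $S$... more to the point, $\pi \cap C_H = \pi \cap S$ (as $\pi \subset H$), and the triple point forces $\pi \cap S$ to have length $\ge 6$ locally at $p$, hence $= 6$ (since $\deg(\pi \cap S) = 6$ globally and $S$ is a smooth complete intersection of type $(2,3)$ with $\pi \not\subset S$) and supported entirely at $p$. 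A length-$6$ scheme supported at one point on a plane, appearing as a complete intersection $\pi \cap S = \pi \cap \mathbb Q \cap \mathbb F$ where $\pi \cap \mathbb Q$ is a conic singular at $p$ (as $\pi$ is tangent to $\mathbb Q$ at $p$) — one checks that the associated length-$3$ subscheme $Z$ with $Z' = Z + Z$ (in the Beauville sense) has ideal $\mathfrak m_p^2$, hence $\pi$ is special. I would make this last step precise by the explicit local computation: write $\mathbb Q$ locally as $x^2 + (\text{higher})$ restricted to $\pi$, so $\pi \cap \mathbb Q$ is $(x^2 = 0)$ up to coordinate change, and then $\pi \cap S$, being cut further by $\mathbb F$ with $\mathbb F(0) = 0$, is $(x^2, x\,f + y\,g)$ for suitable $f,g$; the condition that this has length $6$ (not $5$) at $p$ translates to $g \in \mathfrak m_p$, whence the scheme is $(x^2, xy, y^3)$-like and its ``halving'' $Z$ is $\mathfrak m_p^2 = (x^2, xy, y^2)$ — non-curvilinear.

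The main obstacle I anticipate is the bookkeeping in the ``only if'' direction: one has to be careful that the hyperplane $H_0$ producing the triple point is not forced to equal the (unique) hyperplane cutting $S$ non-reducedly along something, and that the single linear condition on the pencil $\mathcal H_\pi$ is actually solvable over $\mathbb C$ rather than imposing a contradiction with length considerations — equivalently, that the $2$-jet of $\mathbb F$ restricted to $\pi$ at $p$ does not conspire with the $2$-jet of $\mathbb Q$ to prevent a triple point for every $H$. Here I would invoke the non-curvilinearity of $Z$ (the defining property of ``special'') as exactly the statement that $\pi \cap \mathbb Q$ is a \emph{singular} conic at $p$, i.e.\ a pair of lines or a double line in $\pi$, and run the local normal form above in reverse: given $\mathcal I_{Z,\pi} = \mathfrak m_p^2$ one reconstructs $\pi \cap S$ as $(x^2, xy, y^3)$ (up to the choice inside $\mathcal H_\pi$), reads off that the hyperplane realizing the $y^3$-coefficient as the next term gives $C_H$ with a genuine triple point at $p$, and conversely. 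The rest is routine local algebra with the defining ideals, together with the global fact that $\deg(\pi \cap S) = 6$ and $S$ is cut out by $\mathbb Q$ and $\mathbb F$.
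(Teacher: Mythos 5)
Your overall strategy coincides with the paper's: work with the pencil $\mathcal H_\pi$ of hyperplanes through $\pi$ and the pencil $\mathcal P_\pi$ of curves it cuts on $S$, and do intersection--multiplicity bookkeeping at $p$. The converse direction is essentially right (a triple point plus the tangency of $\pi$ forces $\pi\cap S$ to be concentrated at $p$ with multiplicity $6$, whence $\pi$ is special), although your local model contains a slip: the ideal $(x^2,xy,y^3)$ has colength $4$, not $6$; the cuspidal local model for $Z'$ is $(x^2,y^3)$, and in the nodal case $\pi\cap\mathbb Q$ is a pair of distinct lines and $Z'$ is of the form $(xy,\,\text{cubic})$.

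The genuine gap is in the ``only if'' direction, at the step where you claim that producing a triple point is ``genuinely at most one condition on the pencil.'' The vanishing of the quadratic part of the local equation of $C_H$ at $p$ is a priori \emph{three} linear conditions (a binary quadratic form has three coefficients), and a pencil need not contain a member satisfying three conditions. What must be shown is that the quadratic parts of the local equations of the members of $\mathcal P_\pi$ at $p$ are all proportional, so that a single member of the pencil kills the entire quadratic part. Your justification --- that ``$Z'\subset C_H$ already kills the quadratic part along the tangent direction of $\pi$'' --- does not establish this: containing a fixed length--$6$ scheme does not by itself pin the quadratic leading term down to a line in $\mathrm{Sym}^2(\mathfrak m_p/\mathfrak m_p^2)$, and you would have to compute the image of $I_{Z'}$ in $\mathfrak m_p^2/\mathfrak m_p^3$ to see that it is $1$--dimensional. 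The paper obtains exactly this by a case analysis: two general members of $\mathcal P_\pi$ have multiplicity exactly $2$ at $p$ (otherwise their local intersection number would be at least $9>6$) and meet with multiplicity exactly $6$ there, which forces either both to have nodes with pairwise tangent branches or both to have simple cusps with a common cuspidal tangent; in either case the tangent cones of all members coincide, the quadratic parts are proportional, and one member of the pencil has multiplicity at least $3$ at $p$ (and exactly $3$, again by intersection numbers). You need to supply this step, or an equivalent analysis of $I_{Z'}$ modulo $\mathfrak m_p^3$, for the argument to close.
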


\begin{proof}  Suppose that $\pi\in X_S$ is special. Then $\pi$ is tangent to $S$ at a point $p$ and $\pi$ intersects $S$ only at $p$ with intersection multiplicity 6.  Consider the pencil of curves $\mathcal P_\pi$ cut out on $S$ by the pencil $\mathcal H_\pi$ of hyperplanes containing $\pi$. The base locus scheme of $\mathcal P_\pi$ is supported at $p$ and all curves of $\mathcal P_\pi$ are singular at $p$. Let $C$ be a general curve in $\mathcal P_\pi$. It has a point of multiplicity exactly 2 at $p$, because otherwise two general curves of $\mathcal P_\pi$ would have intersection multiplicity at least 9 at $p$, a contradiction. Let $C'$ be another general curve in $\mathcal P_\pi$. Then $C$ and $C'$ have intersection multiplicity 6 at $p$. Then only the following two possibilities may occur:\\ \begin{inparaenum}
\item [$\bullet$] $C$ and $C'$ have both a node at $p$ and the two branches of $C$ at $p$ are tangent to the two branches of $C'$ at $p$;\\
\item [$\bullet$] $C$ and $C'$ have both a simple cusp at $p$ with the same cuspidal tangent.
\end{inparaenum}

In either case, by imposing one single condition to the curves in $\mathcal P_\pi$ we can obtain a curve $C_0$ in $\mathcal P$ with multiplicity at least 3 at $p$. The multiplicity of $C_0$ at $p$ has to be exactly 3 because otherwise $C_0$ would have intersection multiplicity at least 8 at $p$ with the general curve $C$ of $\mathcal P_\pi$, a contradiction. 

Conversely, suppose we have a hyperplane $H$ that cuts out on $S$ a curve $C_0$ with a triple point $p$. Let $\pi$ be the tangent plane to $S$ at $p$, that is contained in $H$. Consider the pencil $\mathcal P_\pi$ of curves cut out on $S$ by the pencil $\mathcal H_\pi$ of hyperplanes containing $\pi$. Then $C_0$ belongs to $\mathcal P_\pi$. The general curve $C$ in $\mathcal P_\pi$ is singular in $p$, so it must have a double point at $p$. Hence the base locus scheme of $\mathcal P_\pi$ is supported at $p$, and this implies that $\pi$ intersects $S$ only at $p$ with intersection multiplicity 6. This yields that $\pi$ is special for $S$. 
\end{proof}

\subsection{Local behaviour}\label{ssec:local}  Let again $S$ be a general K3 surface of degree 6 and genus 4 in $\mP^4$.  In this section we study the local structure of the points of $X_S$. 

\begin{thm}\label{thm:local} If $\pi\in X_S$ is regular, then it is a smooth point for $X_S$. If $\pi\in X_S$ is special then it is a  point of multiplicity at most 2 for $X_S$.
\end{thm}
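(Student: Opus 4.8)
The plan is to use the fact that, viewed inside $\widetilde{S[3]}$, the variety $X_S$ is a connected component of the fixed locus of the involution morphism $\widetilde{\beta_S}$ of the smooth projective variety $\widetilde{S[3]}$; in characteristic zero such a fixed locus is smooth, so this component, which we denote $\hat X_S$, is smooth of dimension $3$. Since $\hat X_S\cap E_S\subseteq Y_S$ and $\hat X_S\cap Y_S=\emptyset$ by Proposition~\ref{prop:noint}, $\hat X_S$ is disjoint from $E_S$, hence lies in $S[3]\setminus{\rm Ind}(\beta_S)$, where ${\rm sp}_S$ is a morphism. Writing $\nu={\rm sp}_S|_{\hat X_S}\colon \hat X_S\to \mG(2,4)$, the discussion after Proposition~\ref{prop:noint} shows $\nu$ is injective, hence (being proper with finite fibres onto a $3$–fold) a finite birational morphism onto $X_S$. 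Thus every singular point of $X_S$ is a point $\pi=\nu(Z)$ at which $\nu$ fails to be an immersion, and the theorem reduces to a local study of $\nu$ at $Z$.

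For $\pi$ \emph{regular} I would argue through hyperplane sections, as in the proof of Theorem~\ref{thm:irr}. Let $U\subset(\mP^4)^\vee$ be the open set of hyperplanes cutting $S$ in a smooth canonical genus $4$ curve $C_H$; by Bertini the general $H\supset\pi$ lies in $U$. Over $U$ the incidence variety $\mathcal X_S$ of pairs $(\pi,H)$ with $\pi\subset H$ is identified, via $(\pi,H)\mapsto(C_H,\mathcal O_{C_H}(D))$ where $2D=\pi\cap C_H$, with the relative variety of odd theta–characteristics of the family $\{C_H\}_{H\in U}$; by Mumford's theory of theta–characteristics in families this is finite étale over $U$, so $\mathcal X_S$ is smooth at every $(\pi,H)$ with $H\in U$. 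As $\mathcal X_S\to X_S$ is the restriction to $X_S$ of the $\mP^1$–flag bundle $\{\pi\subset H\}\to\mG(2,4)$, smoothness of $\mathcal X_S$ at one point over $\pi$ forces $X_S$ to be smooth at $\pi$. (Equivalently one checks that $d\nu$ is injective at $Z$: its kernel consists of first–order deformations of $Z$ inside the fixed plane $\pi=\langle Z\rangle$ keeping $I_{\pi\cap S}=I_Z^2$, and the curvilinear length–$6$ scheme $\pi\cap S$ has a unique length–$3$ ``square root'' even infinitesimally.)

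For $\pi$ \emph{special}, $\nu$ is not an immersion at $Z=\mathfrak m_p^2$ (with $p\in S$ the tangency point and $\pi=T_pS$), and one must bound $\mathrm{mult}_\pi X_S$. I would do an explicit local computation: choose affine coordinates $x_1,\dots,x_4$ centred at $p$ with $\pi=\{x_3=x_4=0\}$, write $S$ locally as $\{x_3=F(x_1,x_2),\,x_4=G(x_1,x_2)\}$ with $F,G\in\mathfrak m_p^2$, and record the degeneracy of the leading terms forced by the characterization of speciality $\dim_{\mC}\mC[[x_1,x_2]]/(F,G)=6$ (the quadratic parts of $F,G$ acquire a common linear factor, with a corresponding degeneration of the cubic parts). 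Then I would parametrize $\hat X_S$ near $Z$ inside $S[3]$ by the Beauville–fixed equations and compute the $2$–jet of $\nu$ at $Z$. The two points to establish are: (i) $d\nu$ has rank exactly $2$ at $Z$, its kernel being the one–dimensional direction along which $Z$ deforms from $\mathfrak m_p^2$ toward a curvilinear length–$3$ scheme while $\langle Z\rangle$ is stationary to first order; and (ii) the quadratic term of $\nu$ in this kernel direction is nonzero. Granting (i) and (ii), three general hyperplanes of $\mG(2,4)\subset\mP^9$ through $\pi$ pull back on $\hat X_S$ to a length–$2$ scheme supported at $Z$, so $\mathrm{mult}_\pi X_S=2$; if the quadratic term vanishes the same count gives multiplicity $1$ (a smooth point). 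In either case $\mathrm{mult}_\pi X_S\le 2$.

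The main obstacle is precisely the computation in the special case: finding coordinates adapted to the degenerate second fundamental form of $S$ at $p$, writing the Beauville–fixed locus $\hat X_S$ there explicitly enough to read off the $2$–jet of the span map, and verifying the nondegeneracy statements (i) and (ii). This can be streamlined by first pinning down the kernel direction geometrically — the deformation of $\mathfrak m_p^2$ into a curvilinear arc — and computing $\nu$ only along that direction and transversally to it, rather than producing a full Taylor expansion.
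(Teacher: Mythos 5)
Your treatment of the regular case is essentially the paper's own argument: the paper also introduces $\mathcal X_S$, observes that over a hyperplane $H\supset\pi$ cutting a smooth canonical curve the fibre of $p_2$ consists of the $120$ distinct odd theta--characteristics, deduces that $p_2$ is \'etale there, and descends smoothness along the $\mP^1$--bundle $p_1$. Your preliminary observation that the component $\hat X_S$ of ${\rm Fix}(\widetilde{\beta_S})$ is automatically smooth (fixed locus of an involution of a smooth variety in characteristic zero) is a nice extra reduction, correctly shifting the whole problem to the local behaviour of $\nu={\rm sp}_S|_{\hat X_S}$.

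The special case, however, is where the actual content of the theorem lies, and there your proposal has a genuine gap: the entire bound $\mathrm{mult}_\pi X_S\le 2$ rests on the assertions (i) that $d\nu$ has rank exactly $2$ at $Z=\mathfrak m_p^2$ and (ii) that the $2$--jet of $\nu$ in the kernel direction is nonzero modulo $\mathrm{im}\,d\nu$, and neither is established --- you explicitly defer the computation. Note also that your fallback is backwards: if the rank of $d\nu$ is $2$ and the quadratic term along the kernel vanishes, the local intersection with three general hyperplanes through $\pi$ has length $\ge 3$, not $1$, so (ii) is not optional for the stated bound. The paper avoids this local computation entirely by an enumerative degeneration: for special $\pi$ it chooses $H\supset\pi$ so that $C=H\cap S$ is irreducible with one node (or cusp) at $p$, splits the fibre $p_2^{-1}(H)$ into the $64$ planes tritangent at smooth points of $C$ (Harris's count) and the $28$ planes through $p$ bitangent elsewhere (bitangents of the plane quartic obtained by projecting from $p$), shows by a monodromy argument on a nearby general $1$--nodal section that the latter all occur with a common multiplicity $m$, and solves $64+28m=120$ to get $m=2$; this bounds the multiplicity of $\mathcal X_S$ at $(\pi,H)$, hence of $X_S$ at $\pi$, by $2$. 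If you want to keep your local approach you must actually carry out the $2$--jet computation at $Z=\mathfrak m_p^2$ using the degenerate local equations of $S$ at $p$; as it stands the special half of the theorem is unproved.
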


\begin{proof} Let us consider again the variety $\mathcal X_S$ introduced in the proof of Theorem \ref {thm:irr}, with the projection $p_2: (\pi,H)\in \mathcal X_S\longrightarrow H\in (\mP^4)^\vee$, that is a finite morphism of degree 120. Let $\pi\in X_S$ be regular. Then we can choose a hyperplane $H$ containing $\pi$ such that the curve $C$ cut out by $H$ on $S$ is smooth and $\pi$ cuts out on $C$ an odd theta--characteristic. The fibre $p_2^{-1} (H)$ consists of the points of the form $(\pi',H)$, with $\pi'$ cutting out on $C$ an odd theta--characteristic. This implies that the fibre $p_2^{-1} (H)$ consists of exactly 120 distinct points, hence $p_2$ is \'etale over $H$. This implies that $\mathcal X_S$ is smooth at each point of the fibre $p_2^{-1} (H)$, in particular it is smooth at 
$(\pi,H)$. By looking at the projection $p_1: (\pi,H)\in \mathcal X_S\longrightarrow \pi\in X_S$, we see that  $\mathcal X_S$ is a $\mP^1$ bundle over $X_S$. So that, being $\mathcal X_S$ smooth at $(\pi, H)$, implies that $X_S$ is smooth at $\pi$.

Let us assume next that $\pi\in X_S$ is special. Then we can choose a hyperplane $H$ containing $\pi$ such that the curve $C$ cut out by $H$ on $S$ has a node or a simple cusp at a point $p$ and it is otherwise smooth. Moreover the plane $\pi$ intersects $C$ only at $p$. Then the fibre $p_2^{-1} (H)$ consists of the points of the form $(\pi',H)$, with $\pi'$ a
plane whose intersection with $C$ consists of 3 (proper or infinitely near) points each counted with multiplicty 2. Such planes are of two types:\\ \begin{inparaenum}
\item [(i)] planes $\pi'$ that are tangent at three (proper or infinitely near) smooth points of $C$;\\
\item [(ii)] planes $\pi'$  passing through the double point $p$ of $C$ and tangent at two (proper or infinitely near) smooth points of $C$. 
\end{inparaenum}

The planes of type (i) are exactly 64 (by  \cite [Cor. (2.7) and (2.13)]{H}). As for the planes of type (ii), by projecting from $p$ we get a smooth plane quartic $C_0$ and the planes in question project to bitangent lines to $C_0$, which are 28. We claim that each of these planes has to be counted with multiplicity two. Let us give this for granted for the time being. Then the fibre  $p_2^{-1} (H)$ contains the point $(\pi, H)$ with multiplicity 2, and this implies that 
$(\pi, H)$ is a point of multiplicity at most 2 for $\mathcal X_S$. Then, arguing as above, we see that $\pi$ is a point of multiplicity at most 2 for $X_S$. 

To finish our proof we have to prove the above claim. To do this, we note that $C$ is a limit of a general nodal hyperplane section $C'$ of $S$. Let $H'$ be the hyperplane spanned by $C'$ and let $q$ be the node of $C'$. Then  the fibre $p_2^{-1} (H')$ consists of the points of the form $(\pi',H')$, with:\\ \begin{inparaenum}
\item [(i')]  $\pi'$ a plane that is tangent at three (proper or infinitely near) smooth points of $C'$;\\
\item [(ii')] $\pi'$  a plane passing through the node $q$ of $C$ and tangent at two (proper or infinitely near) smooth points of $C$. 
\end{inparaenum}

The planes of type (i') are again 64. By projecting from $q$ we get a general smooth plane quartic $C'_0$ and the planes of type (ii') project to bitangents to $C'_0$, which are 28. By generality, we may assume that these bitangents are acted upon by a transitive monodromy. This shows that the planes of type (ii') have to be counted with the same multiplicity $m$. Since $64+28m=120$, we see that $m=2$, as wanted.  \end{proof}

\subsection{Enumeration of the special planes}  \label {ssec:special}
To prove the next result we need a preliminary fact that we cite from \cite {Gal}. Let $\mathbb D$ be a disk and let $\mathcal X\longrightarrow \mathbb D$ be a flat family of projective surfaces with smooth total space and smooth, irreducible general fiber $\mathcal X_t$, with $t\in \mathbb D\setminus \{0\}$, and whose special fiber $\mathcal X_0 = A\cup B$ consists  of the union of two irreducible, reduced components $A$ and $B$, intersecting transversally along a smooth curve $R = A\cap B$. Let $\mathcal L$ be a line bundle on $\mathcal X$.  Let $\mathcal C\longrightarrow \mathbb D$ be a flat family of curves, with $\mathcal C$ a section of $\mathcal L$ on $\mathcal X$. We denote by $C_t$ the fibre of $\mathcal C\longrightarrow \mathbb D$ over $t\in \mathbb D$. We assume that, for  $t\in \mathbb D\setminus \{0\}$, $C_t$ is reduced,
with a triple point $p_t$. We assume, moreover, that the curve $C_t$ tends to the curve $C_ 0$ in such a way that $p_ t$ tends to a point $p \in R$. We will set $C_0=C_A\cup C_B$ with $C_A$ and $C_B$ the components of $C_0$ on $A$ and $B$ respectively. The result we need is the following:

\begin{prop}[\cite {Gal}, \S 3] \label{prop:lim}
 In the above set up $C_0$ has at least the following singularity: both $C_A$ and $C_B$ have a double point at $p$ with one branch tangent to $R$.
\end{prop}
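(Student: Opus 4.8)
The claim is local near $p$, so the plan is to work in a small polydisc around $p$ in $\mathcal X$ and read the statement off the Taylor expansion of a local equation of $\mathcal C$. Since $\mathcal X$ is smooth and $\mathcal X_0=A\cup B$ is a reduced normal crossings divisor with $A\cap B=R$ smooth, one may choose local analytic coordinates $(x,y,z)$ centred at $p$ in which the morphism $\mathcal X\to\mathbb D$ is $t=xy$, with $A=\{x=0\}$, $B=\{y=0\}$, $R=\{x=y=0\}$ and $z$ restricting to a coordinate on $R$. Writing $\mathcal C=V(f)$ for a convergent power series $f=\sum a_{ijk}x^iy^jz^k$, one has $C_A=V(f(0,y,z))\subset A$ and $C_B=V(f(x,0,z))\subset B$, with $R$ the line $\{y=0\}$ in the coordinates $(y,z)$ on $A$ and $\{x=0\}$ in the coordinates $(x,z)$ on $B$. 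The assertion is then equivalent to the vanishings
\[a_{000}=a_{001}=a_{002}=a_{010}=a_{100}=0,\]
which say precisely that $C_A$ (resp.\ $C_B$) passes through $p$, is singular there, and has no branch transverse to $R$ (no linear or quadratic $z$–term in $f(0,y,z)$, resp.\ $f(x,0,z)$); consequently the tangent cone of $C_A$ (resp.\ $C_B$) at $p$ is $y(a_{020}y+a_{011}z)$ (resp.\ $x(a_{200}x+a_{101}z)$), a pair of lines one of which is $R$, i.e.\ a double point with a branch tangent to $R$ (if that quadratic part degenerates the singularity is only worse, consistently with the ``at least'' in the statement).

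To extract these vanishings from the hypothesis, the plan is to write down the triple–point equations for $C_t$ at $p_t$. For $t\ne 0$ the surface $\mathcal X_t=\{xy=t\}$ is smooth near $p_t=(x_t,y_t,z_t)$ (here $x_t,y_t\ne 0$). Parametrising $\mathcal X_t$ near $p_t$ by $(\xi,\zeta)$ through $x=x_t+\xi$, $z=z_t+\zeta$, $y=t/(x_t+\xi)=y_t-\tfrac{y_t}{x_t}\xi+\tfrac{y_t}{x_t^{2}}\xi^{2}-\cdots$, expanding $f$ to order two, and imposing that the constant, linear and quadratic parts of the restriction all vanish, one obtains
\[f(p_t)=0,\quad f_z(p_t)=0,\quad x_tf_x(p_t)=y_tf_y(p_t),\quad f_{zz}(p_t)=0,\quad x_tf_{xz}(p_t)=y_tf_{yz}(p_t),\]
\[2x_tf_x(p_t)+x_t^{2}f_{xx}(p_t)-2x_ty_tf_{xy}(p_t)+y_t^{2}f_{yy}(p_t)=0.\]
The last relation is the decisive one: its first term $2x_tf_x(p_t)$ is only of \emph{first} order in $(x_t,y_t)$ — it comes from the curvature of the ruled quadric $\mathcal X_t$ — whereas all other terms are of second order; the symmetric parametrisation $y=y_t+\eta$ yields the companion relation $2y_tf_y(p_t)+y_t^{2}f_{yy}(p_t)-2x_ty_tf_{xy}(p_t)+x_t^{2}f_{xx}(p_t)=0$.

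Since $p_t\to p$, after translating $z$ so that $z(p)=0$ we have $x_t,y_t,z_t\to 0$; letting $t\to 0$ in the first, second and fourth equations gives immediately $a_{000}=a_{001}=a_{002}=0$. The remaining vanishings $a_{100}=a_{010}=0$ are the heart of the matter and require controlling the order at which $p_t$ tends to $p$ \emph{along} $\mathcal X_t$. The plan is to perform a ramified base change $t=\tau^{N}$ and normalise the total space, so that by the valuative criterion the arc $\tau\mapsto p_{\tau^{N}}$ of $\mathcal C\to\mathbb D$ is holomorphic: $x_\tau=\tau^{a}u(\tau)$, $y_\tau=\tau^{b}v(\tau)$ with $u(0)v(0)\ne 0$, $a+b=N$, $a,b\geq 1$, and $z_\tau$ of order $\geq 1$. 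One then substitutes into $x_\tau f_x=y_\tau f_y$, into the curvature relation, and into $x_\tau f_{xz}=y_\tau f_{yz}$, and compares lowest–order terms in $\tau$. When $a=b$ the term $2x_\tau f_x(p_\tau)$ is the unique term of minimal order $\tau^{N/2}$ in the curvature relation, forcing $a_{100}=0$, and then $x_\tau f_x=y_\tau f_y$ at order $\tau^{N/2}$ forces $a_{010}=0$. When $a<b$ (the case $a>b$ being symmetric) the relation $x_\tau f_x=y_\tau f_y$ at order $\tau^{a}$ forces $a_{100}=0$; then $x_\tau f_{xz}=y_\tau f_{yz}$ forces $a_{101}=0$, feeding this back the curvature relation at order $\tau^{2a}$ forces $a_{200}=0$ (and, continuing the same chase on the coefficients $a_{10k}$), whereupon $x_\tau f_x=y_\tau f_y$ now forces $a_{010}=0$ as well. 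In every case one obtains $f_x(p)=f_y(p)=0$, which by the first paragraph completes the proof.

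The main obstacle is this last step, the order bookkeeping: a priori nothing controls the exponents $a,b$ or the vanishing orders of $z_\tau$ and of $f_x(p_\tau),f_y(p_\tau)$, and a careful finite case analysis is needed to show that the full triple–point system — and specifically the anomalous first–order curvature term $2x_tf_x$ — always annihilates the two linear coefficients $a_{100}$ and $a_{010}$. A possibly cleaner route, which I would also try, is to iterate base changes and blow–ups until the section $p_\tau$ falls into the interior of a single component $V$ of a semistable central fibre $A=V_0\cup\cdots\cup V_N=B$; then $C_0\cap V$ acquires an honest triple point, and one transports this back, along the contractions $V_i\to R$, to the prescribed singularities of $C_A$ and $C_B$.
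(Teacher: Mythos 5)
The paper does not prove this proposition --- it is quoted verbatim from \cite[\S 3]{Gal} --- so your proposal should be judged as a self-contained argument. Your reduction of the statement to the five vanishings $a_{000}=a_{001}=a_{002}=a_{010}=a_{100}=0$ is correct, your triple-point system (including the ``curvature'' relation $2x_tf_x+x_t^2f_{xx}-2x_ty_tf_{xy}+y_t^2f_{yy}=0$, which I checked) is correct, and the limits giving $a_{000}=a_{001}=a_{002}=0$ together with the balanced case $a=b$ are complete. But there is a genuine gap exactly where you flag one: the vanishing $a_{010}=0$ in the unbalanced case $a<b$ (equivalently $f_y(p)=0$, i.e.\ the fact that $C_B$ is actually \emph{singular} at $p$ rather than merely passing through it) is not proved. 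Concretely: substituting $x_\tau f_x=y_\tau f_y$ into the curvature relation gives $2y_\tau f_y+x_\tau^2f_{xx}-2x_\tau y_\tau f_{xy}+y_\tau^2f_{yy}=0$; if $b<2a$ the term $2y_\tau f_y$ is alone at order $\tau^b$ and $a_{010}=0$ follows, but if $b\geq 2a$ the term $x_\tau^2f_{xx}$ --- and, when $\operatorname{ord}(z_\tau)$ is small, terms such as $2a_{102}x_\tau z_\tau^2$ hiding inside $2x_\tau f_x$ --- can a priori cancel $2a_{010}v(0)\tau^b$. Excluding such cancellations forces an induction that interleaves all six relations (including $f(p_\tau)=f_z(p_\tau)=f_{zz}(p_\tau)=0$, needed to control the coefficients $a_{00k},a_{10k},a_{01k}$) over the three unknown orders $a,b,\operatorname{ord}(z_\tau)$; your ``continuing the same chase'' does not carry this out, and since this is precisely the nontrivial content of the proposition, the argument as written is incomplete.

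The ``possibly cleaner route'' you mention at the end is in fact Galati's actual proof: after a base change $t=\tau^m$ the total space acquires $A_{m-1}$-singularities along $R$, and resolving produces a central fibre that is a chain $A\cup E_1\cup\cdots\cup E_{m-1}\cup B$ of surfaces with the $E_i$ ruled over $R$; for suitable $m$ the limit of $p_{\tau}$ lands in the interior of some $E_i$, where the limit curve has an honest triple point, and the tangency and multiplicity constraints on $C_A$ and $C_B$ are then read off from the intersection numbers of the limit divisor with the rulings and the double curves of the chain. If you want a complete proof rather than the coefficient chase, that is the argument to write down (or simply cite \cite{Gal} as the paper does).
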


The singularity of $C_0$ at $p$ described in Proposition \ref {prop:lim} will be called a \emph{limit triple point}.  Now we are in a position to prove the following:

\begin{prop}\label{prop:fin} Let $S$ be a general K3 surface of degree 6 and genus 4 in $\mP^4$. There are only finitely many hyperplane sections of $S$ having a triple point. In particular the set of special planes for $S$ is finite. 
\end{prop}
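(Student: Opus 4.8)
The plan is to prove finiteness by a dimension count combined with the degeneration result of Proposition~\ref{prop:lim}. Suppose, for contradiction, that $S$ has a positive-dimensional family of hyperplane sections with a triple point. Since $\Pic(S)=\mZ\langle\sO_S(1)\rangle$, all such hyperplane sections are curves in $|\sO_S(1)|=\mP^4$, and we would get an irreducible curve $T\subset(\mP^4)^\vee$ (after passing to a suitable component and normalizing) parametrizing hyperplanes $H_t$ whose section $C_t=H_t\cap S$ is reduced with a triple point $p_t$. The first step is to arrange a good degeneration: deform $S$ inside the universal family of K3 surfaces of degree $6$ and genus $4$ to a surface $S_0$ that is a transverse union $A\cup B$ along a smooth curve $R=A\cap B$ (for instance a degeneration to a union of two rational or simpler surfaces, or a type-II-like degeneration with a smooth total space $\sX\to\mD$), choosing the family so that the generic member has Picard rank $1$. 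We then carry the curve family along: the triple-point condition is closed, so we obtain a limiting curve family $\sC\to\mD$ with $\sC\subset\sX$ a divisor in the appropriate line bundle $\sL$, whose general fibre has a triple point $p_t\to p\in R$, and whose special fibre is $C_0=C_A\cup C_B$ with $C_A\subset A$, $C_B\subset B$.

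The second step applies Proposition~\ref{prop:lim}: the limit $C_0$ has a \emph{limit triple point} at $p\in R$, meaning both $C_A$ and $C_B$ acquire a double point at $p$ with one branch tangent to $R$. The third step is the heart of the argument: count parameters. On each of the components $A$ and $B$ of the degenerate K3, the linear systems cutting out $C_A$ and $C_B$ are explicit, and imposing on $C_A$ (respectively $C_B$) a double point at a \emph{variable} point $p$ of the fixed curve $R$, with the extra tangency-to-$R$ condition, is three conditions on each side while the point $p$ moving on $R$ contributes one parameter; the two sides must meet compatibly along $R$ (they share $p$ and the residual intersection with $R$ must match up), which couples the two systems. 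Carrying this bookkeeping out on the specific degenerate surface one finds that the expected dimension of the locus of such $C_0$ is $\le 0$, so for general $S$ the locus of hyperplane sections with a triple point is finite. Finally, Proposition~\ref{prop:spec} translates this into the statement about special planes: a special plane $\pi$ determines (by that proposition) a point $p\in S$ and a hyperplane through $\pi$ whose section has a triple point at $p$; since $\pi$ is the tangent plane to $S$ at $p$, the special plane is recovered from that hyperplane section, so finitely many such sections give finitely many special planes.

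I expect the main obstacle to be the choice and analysis of the degeneration in steps one through three: one must exhibit a semistable (or at least mildly singular, smooth-total-space) degeneration $S_0=A\cup B$ of the degree-$6$ genus-$4$ K3 for which (a) the generic nearby surface has Picard rank $1$ and is a genuine complete intersection of a quadric and a cubic in $\mP^4$, (b) the hyperplane line bundle extends and restricts to understood linear systems on $A$ and $B$, and (c) the parameter count for curves in $|\sL_{|A}|$ and $|\sL_{|B}|$ with a limit triple point along $R$ genuinely comes out nonpositive. A natural candidate is to take $\mathbb Q$ to degenerate (or rather to project/degenerate the $\mP^4$-embedding) so that $S$ breaks as the union of two surfaces glued along an elliptic or canonical curve $R$; but verifying the count (c) in detail, and ruling out that the triple-point family could persist by concentrating entirely on one component, is where the real work lies. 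An alternative, if a clean degeneration is awkward, would be to bound $\dim$ directly on $S$ itself via the expected-dimension computation of Proposition~\ref{prop:exp} together with the genericity of $S$; but the degeneration route is cleaner because Proposition~\ref{prop:lim} pins down exactly what the limiting singularity must be.
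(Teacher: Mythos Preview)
Your overall strategy---degenerate, apply Proposition~\ref{prop:lim}, and show the limit locus is at most $0$--dimensional---is exactly the paper's. What you flag as ``the main obstacle'' is precisely the content the paper supplies, so your proposal is a correct outline but not yet a proof.

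The paper's degeneration is concrete: keep the cubic $\mathbb F=\{f=0\}$ general and let the quadric degenerate to a union of two general hyperplanes, i.e.\ consider $\{f=tg+\ell\ell'=0\}$. The central fibre is $A\cup A'$ with $A=\mathbb F\cap H$ and $A'=\mathbb F\cap H'$ two \emph{general cubic surfaces} in $\mP^3$, glued along the smooth plane cubic $R=A\cap A'$. After a small resolution of the six $A_1$ points of the total space one is in the situation of Proposition~\ref{prop:lim}. The finiteness on the central fibre is then not a formal parameter count but two geometric facts: (i) a general cubic surface in $\mP^3$ has \emph{no} plane section with a triple point (this is codimension~$1$ in moduli), so triple points cannot survive in the interior of either component; (ii) for a general $p\in R$ there is a \emph{unique} hyperplane of $\mP^4$ whose sections $C_A,C_{A'}$ are both singular at $p$ (namely the hyperplane spanned by the two tangent planes, which meet along $T_pR$), and for general $p$ the branches of these nodes are \emph{not} tangent to $R$. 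Hence limit triple points along $R$ occur for at most finitely many $p$, and no $1$--parameter family survives.

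Two remarks on your write-up. First, the Picard rank~$1$ hypothesis is a red herring here: the paper never uses it in this argument, only that the general member of the explicit pencil is a general K3. Second, your parameter count (``three conditions on each side, one parameter from $p$'') is morally right but would need care, since the two components are coupled through the single hyperplane of $\mP^4$; the paper sidesteps this by observing directly that for fixed $p$ the doubly singular section is unique, reducing the question to a tangency condition along the $1$--dimensional curve $R$.
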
 

\begin{proof} Let $X$ be a general hypersurface of $\mathbb P^4$ of degree $3$ with equation $f=0$, let $H$, $H'$ be general hyperplanes in $\mP^4$ with equations $\ell=0$ and $\ell'=0$ respectively, and let $Y$ be a general hypersurface of degree 2 in $\mP^4$ with equation $g=0$. We  consider the threefold $\mathcal X''$ in  $X\times \mathbb A^1\subset  \mathbb P^4\times \mathbb A^1 $ defined in  $\mathbb P^4\times \mathbb A^1 $ by the equations $\{f=tg+\ell\ell' =0,\,\text{with}\,t\in \mathbb A^1\}$. Via the second projection $\mathcal X''\longrightarrow\mathbb A^1$ this is a flat family of surfaces, with smooth general fibre $\mathcal X''_t$, that is a general
complete intersection surface of type $(2,3)$ in $\mP^4$, and  whose fibre over $0$ is $\mathcal X''_0=A\cup A'$ with $A=X\cap H, A'= X\cap H'$ two general cubic surfaces in $\mP^3$, and $A\cap A'=R$ is a smooth plane cubic curve, section of $A$ and $A'$ with the plane $\pi$ with equations $\ell=\ell'=0$.

 We are interested in the singularities of $\mathcal X''$ in a neighbordhood of the central fibre, i.e., we are interested in what happens if $t$ belongs to a disc $\mathbb D$, centered at the origin of $\mathbb A^1$. Thus we consider the flat family $\mathcal X'=\{f=tg+\ell\ell' =0,\,{ \rm with}\,t\in \mathbb D\}\to\mathbb D$.  It is immediate to see
that the singular locus of $\mathcal X'$ coincides with the scheme with equation $t=f=g=\ell=\ell'=0\subset \mathcal X'_0$, that consists of six distinct points $p_1,\ldots, p_6$, located on the plane $\pi$ and complete intersection there of the cubic $R$ with the quadric $Y$. Moreover, $\mathcal X'$ has points of type $A_1$ at $p_1,\ldots, p_6$.
We resolve these singularities by blowing-up $\mathcal X'$ at $p_1,\ldots, p_6$. One obtains a new family $\tilde{\mathcal X}\longrightarrow\mathbb D$ with the same general fibre as $\mathcal X'\longrightarrow\mathbb D$
 and whose central fibre consists of eight components $\tilde A$ and $\tilde A'$, the blow-ups of $A$ and of $A'$ along $p_1,\ldots, p_6$, plus the six exceptional divisors $E_i$ over $p_i$, for $\leq i\leq 6$. Each of these exceptional divisors is  
 isomorphic to $\mathbb P^1\times \mathbb P^1$. Now we can contract each of these exceptional divisors by contracting one of the two rulings of   
 $\mathbb P^1\times \mathbb P^1$. We choose to do this in the direction of $\tilde A'$.
 We thus obtain a new family of 3-folds $\mathcal X\to\mathbb D$, with $\mathcal X$ smooth,  with fiber $\mathcal X_t=\mathcal X'_t$ over $t\neq 0$, and  whose central fiber $\mathcal X_0=A\cup B$, where  $B=\tilde A'$. Moreover 
 $A$ and $B$ intersect transversally along a curve that is isomorphic to  $R$.
 
 There is an obvious morphism $\mathcal X\longrightarrow  \mP^4$, and we denote by $\mathcal L$ the pull back of $\mathcal O_{\mP^4}(1)$ via this morphism. The restriction  $\mathcal L_t$ of $\mathcal L$ to $\mathcal X_t$, with $t\neq 0$, coincides with $\mathcal O_{\mathcal X_t}(1)$. The restriction  $\mathcal L_0$ of $\mathcal L$ to $\mathcal X_0$ maps $A\cup B$ to $A\cup A'$. 
 
Suppose now,  by contradiction, that for $t\in \mathbb D$ general, the surface $\mathcal X_t$ has infinitely many hyperplane sections with a triple point. In particular, there would be a 1--dimensional family $\mathcal H_t$ of such hyperplane sections of $\mathcal X_t$.  Then  also $\mathcal X_0$ would have a 1--dimensional family $\mathcal H_0$ of 
 sections in $\mathcal L_0$ with a triple point or a limit triple point. 

However, since $A$ and $A'$ are two general cubic surfaces in a $\mP^3$, they have no  hyperplane  section with a triple point (having a plane section with a triple point has codimension 1 in the moduli space of smooth cubics in $\mP^3$, see, e.g., \cite [Thm. 1.3] {Cosk}). Hence there are no sections of $\mathcal L_0$ that have a triple point on $A$ or $B$.  So the triple points of the curves in $\mathcal H_t$ for $t\in \mathbb D\setminus \{0\}$, would tend to points on the double curve $R$ and would be limit triple points as described in Proposition \ref {prop:lim}. Now, if $p\in R=A\cap A'$ is a general point, there is a unique hyperplane section of $A\cup A'$ such that its two components $C_A$ and $C_{A'}$ on $A$ and $A'$ respectively, have a double point at $p$. The hyperplane in question is the one containing the tangent planes to $A$ and $A'$ at $p$, that intersect along the tangent line to $R$ at $p$. Then the two curves $C_A$ and $C_{A'}$  do have a double point at $p$, but, if $p\in R$ is general, the branches of $C_A$ and $C_{A'}$ are not tangent to $R$. This implies that if $p\in R=A\cap B$ is a general point, there is no section of $\mathcal L_0$ that has in $p$ a limit triple point. In conclusion, in the limit, we cannot have a positive dimensional family of  sections of $\mathcal L_0$ on $\mathcal X_0$ with a triple point or limit triple point, and this proves the assertion.
\end{proof} 

Next we want to prove that the set of special planes for  a general K3 surface of degree 6 and genus 4 in $\mP^4$ is non--empty and we want to compute their (expected) number. In order to do so, we need some more preliminaries. 

Let $V$ be a smooth, irreducible variety of dimension $k$ endowed with a line bundle $L$. For any integer $n\geq 0$, let $J^n(V, L)$ be the \emph{$n$--th jet bundle} relative to the pair $(V,L)$, that is defined in the following way (as a general reference on jet bundles see \cite {S}). Consider $V\times V$ with the two projections $p_i: V\times V\to V$, $1\leq i\leq 2$, to the first and second factor. One defines
$$
J^n(V, L)=p_{1*}(\mathcal O_{V\times V}/\mathcal I^n_{\Delta,V\times V} \otimes p_2^*(L))
$$
where $\Delta$ is the diagonal in $V\times V$.  Note that $J^0(V, L)=L$ and more generally $J^n(V, L)$ is a vector bundle of rank ${{n+k}\choose k}$. For all positive integers there is an exact sequence
\begin{equation}\label{eq:seq}
0\longrightarrow L \otimes {\rm Sym}^n(\Omega_V)\longrightarrow J^n(V, L) \longrightarrow J^{n-1}(V, L)\longrightarrow 0.
\end{equation}
So, at a local level, we have
$$
J^n(V, L)\sim L\otimes (1\oplus \Omega_V \oplus \cdots \oplus {\rm Sym}^n(\Omega_V))
$$
and there is an evaluation map of vector bundles
$$
{\rm ev}^n_{V,L}: H^0(V, L)\otimes \mathcal O_V \longrightarrow J^n(V, L). 
$$

Let now $\Sigma\subset \mP^4$ be a smooth, irreducible, projective, non--degenerate surface. We will denote by $H_\Sigma$ the hyperplane bundle class of $\Sigma$. There is an \emph{expected number} $\tau_\Sigma$ of hyperplane sections of $\Sigma$ having  a triple point. One has: 

\begin{prop}\label{prop:exp} If $\Sigma\subset \mP^4$ is a smooth, irreducible, projective, non--degenerate, linearly normal surface, we have
\begin{equation}\label{eq:tau}
\tau_\Sigma=5K^2_\Sigma+20 H_\Sigma\cdot K_\Sigma+15 H_\Sigma^2+5e(\Sigma)
\end{equation}
where $e(\Sigma)$ is the Euler--Poincar\'e characteristic of $\Sigma$. 
\end{prop}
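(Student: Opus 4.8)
The plan is to reinterpret $\tau_\Sigma$ as the degree of a Thom--Porteous degeneracy locus on $\Sigma$ attached to the $2$--jet evaluation map of $\mathcal O_\Sigma(1)$, and then to evaluate the relevant Chern class through the jet bundle filtration \eqref{eq:seq}. Put $L=\mathcal O_\Sigma(1)$, $H=H_\Sigma$ and $K=K_\Sigma=c_1(\Omega_\Sigma)$. A hyperplane of $\mP^4$ corresponds, up to scalars, to a section $\ell\in H^0(\Sigma,L)$, and the hyperplane section $\mathrm{div}(\ell)$ acquires a point of multiplicity $\geq 3$ at $p\in\Sigma$ exactly when $\ell\in\mathfrak m_p^3L_p$, that is, exactly when ${\rm ev}^2_{\Sigma,L}(\ell)$ vanishes in the fibre $J^2(\Sigma,L)_p$. (One uses here that $\Sigma$ is smooth, so that the multiplicity at $p$ of the Cartier divisor $\mathrm{div}(\ell)$ equals the order of $\ell$ in the regular local ring $\mathcal O_{\Sigma,p}$, an invariant of $\mathcal O_{\mathrm{div}(\ell),p}$ which therefore coincides with the multiplicity at $p$ of the hyperplane section viewed as a curve in $\mP^3$.) Since $\Sigma$ is non--degenerate and linearly normal, $\dim H^0(\Sigma,L)=5$, whereas $J^2(\Sigma,L)$ has rank $\binom{2+2}{2}=6$. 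Hence the hyperplane sections with a triple point, recorded together with the position of the triple point, are parametrised by the locus $D\subset\Sigma$ where the morphism of vector bundles
\[
{\rm ev}^2_{\Sigma,L}\colon H^0(\Sigma,L)\otimes\mathcal O_\Sigma\longrightarrow J^2(\Sigma,L),
\]
from a trivial bundle of rank $5$ to a bundle of rank $6$, has rank $\leq 4$. Its expected codimension in $\Sigma$ is $(5-4)(6-4)=2$, so $D$ is expected to be a finite reduced set of points, each carrying a one--dimensional kernel and hence a single hyperplane; accordingly $\tau_\Sigma=\deg[D]$.

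Next I would invoke the Thom--Porteous formula. For a morphism $E\to F$ of vector bundles with $E$ trivial, the locus where the rank drops by at least one, when it has the expected codimension $\mathrm{rank}(F)-\mathrm{rank}(E)+1$, has class $c_{\mathrm{rank}(F)-\mathrm{rank}(E)+1}(F)$. Here this yields $[D]=c_2\big(J^2(\Sigma,L)\big)$, so that
\[
\tau_\Sigma=\int_\Sigma c_2\big(J^2(\Sigma,\mathcal O_\Sigma(1))\big).
\]

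It then remains to compute this number. From \eqref{eq:seq} with $n=1$ and $n=2$ one obtains, in the Chow ring of $\Sigma$,
\[
c\big(J^2(\Sigma,L)\big)=c(L)\cdot c(L\otimes\Omega_\Sigma)\cdot c\big(L\otimes\Sym^2\Omega_\Sigma\big).
\]
Writing the Chern roots of $\Omega_\Sigma$ as $\alpha,\beta$ with $\alpha+\beta=K$ and $\alpha\beta=c_2(\Omega_\Sigma)=e(\Sigma)$, one has $c(L)=1+H$,
\[
c(L\otimes\Omega_\Sigma)=1+(K+2H)+\big(e(\Sigma)+H\cdot K+H^2\big),
\]
and, since $\Sym^2\Omega_\Sigma$ has Chern roots $2\alpha,\,\alpha+\beta,\,2\beta$,
\[
c\big(L\otimes\Sym^2\Omega_\Sigma\big)=1+3(H+K)+\big(2K^2+6\,H\cdot K+3H^2+4e(\Sigma)\big).
\]
Multiplying these three total Chern classes and retaining only the part of degree $\leq 2$, the degree--$2$ component works out to $5K^2+20\,H\cdot K+15\,H^2+5e(\Sigma)$; integrating over $\Sigma$ gives \eqref{eq:tau}.

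The real work is entirely in the last step: carrying out the expansion of the symmetric--power Chern classes and of the triple product without slips, which is elementary but needs care. The only conceptual point one might fuss over is whether $D$ genuinely has the expected dimension and reduced structure, so that $\tau_\Sigma$ is the honest number of hyperplane sections with a triple point; this, however, is not required for the statement, which asserts only the value of the virtual count $\tau_\Sigma$.
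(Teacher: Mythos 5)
Your proposal is correct and follows essentially the same route as the paper: interpret $\tau_\Sigma$ as the class of the rank--$\leq 4$ degeneracy locus of ${\rm ev}^2_{\Sigma,H_\Sigma}\colon H^0(\Sigma,H_\Sigma)\otimes\mathcal O_\Sigma\to J^2(\Sigma,H_\Sigma)$, identify it with $c_2(J^2(\Sigma,H_\Sigma))$, and expand via the jet--bundle exact sequence; your three total Chern classes and the final degree--2 coefficient $5K^2+20H\cdot K+15H^2+5e(\Sigma)$ all match the paper's computation. The only (minor) difference is that you make the Thom--Porteous step explicit where the paper simply asserts the identification with $c_2$.
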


\begin{proof} Consider the evaluation map
$$
{\rm ev}^2_{\Sigma, H_\Sigma}: H^0(\Sigma, H_\Sigma)\otimes \mathcal O_\Sigma \longrightarrow J^2(\Sigma, H_\Sigma), 
$$
where $h^0(\Sigma, H_\Sigma)=5$ and the rank of $J^2(\Sigma, H_\Sigma)$ is 6. The points $p\in \Sigma$ where ${\rm ev}^2_{\Sigma, H_\Sigma}$ does not have maximal rank 5, are exactly the points $p$ for which there is a section $s\in H^0(\Sigma, H_\Sigma)$ such that the curve $C$ defined by $s=0$ has a point  of multiplicity at least 3  at $p$. So the required number $\tau_\Sigma$ is nothing but the second Chern class $c_2(J^2(\Sigma, H_\Sigma))$. Using the exact sequence \eqref {eq:seq}, we have the identity of Chern polynomials
\begin{equation}\label{eq:id}
c_t(J^2(\Sigma, H_\Sigma))=c_t(H_\Sigma)\cdot c_t(\Omega_\Sigma\otimes H_\Sigma) \cdot c_t({\rm Sym}^2(\Omega_\Sigma)\otimes H_\Sigma).
\end{equation}
One has
$$
\begin{aligned}
&c_t(H_\Sigma)=1+H_\Sigma t\cr
&c_t(\Omega_\Sigma\otimes H_\Sigma)=1+(K_\Sigma+2H_\Sigma)t+(e(\Sigma)+K_\Sigma \cdot H_\Sigma+H_\Sigma^2)t^2\cr
&c_t({\rm Sym}^2(\Omega_\Sigma)\otimes H_\Sigma)=1+3(K_\Sigma+H_\Sigma)t+(2K_\Sigma^2+4e(\Sigma)+3H_\Sigma^2+6K_\Sigma\cdot H_\Sigma)t^2.
\end{aligned}
$$
Plugging into \eqref {eq:id} and computing, one checks that $\tau_\Sigma=c_2(J^2(\Sigma, H_\Sigma))$ is given by \eqref {eq:tau}.\end{proof}

\begin{rmk}\label{rmk:gen} One can easily prove a more general result by letting $\Sigma$ be not necessarily linearly normal and with improper double points. We do not dwell on this here.
\end{rmk}

\begin{cor}\label{cor:k3} Let $S$ be a general K3 surface of degree 6 and genus 4 in $\mP^4$. Then $\tau_S=210$, i.e.,  the expected number of  special planes for $S$ is 210. 
\end{cor}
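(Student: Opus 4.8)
The plan is to obtain Corollary \ref{cor:k3} as an immediate application of Proposition \ref{prop:exp}, supplemented by the enumerative interpretation coming from Propositions \ref{prop:spec} and \ref{prop:fin}. First I would verify that $S$ satisfies the hypotheses of Proposition \ref{prop:exp}. By the standing conventions of the Introduction, a general K3 surface of degree $6$ and genus $4$ in $\mP^4$ is smooth, irreducible, projective and non--degenerate, and it is linearly normal, being embedded by the complete linear system $|\sO_S(1)|$ with $h^0(S,\sO_S(1))=5$ and $\Pic(S)=\mZ\langle\sO_S(1)\rangle$.

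Next I would plug the numerical invariants of $S$ into formula \eqref{eq:tau}. Since $S$ is a K3 surface we have $K_S=0$, so $K_S^2=0$ and $H_S\cdot K_S=0$; moreover $H_S^2=\deg S=6$ and $e(S)=24$, the topological Euler--Poincar\'e characteristic of any K3 surface. Hence
$$
\tau_S=5\cdot 0+20\cdot 0+15\cdot 6+5\cdot 24=90+120=210.
$$

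For the last assertion of the statement I would invoke Proposition \ref{prop:spec}: a plane is special for $S$ precisely when it is the tangent plane at a point $p\in S$ for which some hyperplane containing it cuts out on $S$ a curve with a triple point at $p$; and, as in the proof of that proposition, the hyperplane realizing the triple point is uniquely determined inside the pencil $\mathcal P_\pi$ (it is the hyperplane through the two curves of $\mathcal P_\pi$ that witness the triple point). This produces a bijection between special planes for $S$ and hyperplane sections of $S$ carrying a triple point, a set which is finite by Proposition \ref{prop:fin}. Therefore the expected number of special planes equals $\tau_S=210$.

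I do not anticipate any real difficulty here: the substance is already contained in the Chern class computation of Proposition \ref{prop:exp}, and what remains is only the bookkeeping of the standard invariants of a K3 surface. The sole point requiring a moment's care is the dictionary between $\tau_S$ and the count of special planes, which rests on Propositions \ref{prop:spec} and \ref{prop:fin} rather than on any fresh input.
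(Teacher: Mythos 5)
Your proposal is correct and coincides with the paper's own one-line proof: substitute $K_S=0$, $H_S^2=6$ and $e(S)=24$ into formula \eqref{eq:tau} of Proposition \ref{prop:exp} to get $15\cdot 6+5\cdot 24=210$. The additional remarks on linear normality and on the dictionary between special planes and hyperplane sections with a triple point (via Propositions \ref{prop:spec} and \ref{prop:fin}) are sound but not needed beyond what the paper already records.
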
 

\begin{proof} It follows from Proposition \ref {prop:exp} by taking into account that $K_S=0$, $H_S^2=6$ and $e(S)=24$. \end{proof}

\section{The degree}\label{sec:degre}

Let again $S$ be a general K3 surface of degree 6 and genus 4 in $\mP^4$. In this section we will compute the degree of $X_S$ as a subvariety of  $\mathbb G(2,4)\subset \mP^9$. 
The result is the following:

\begin{thm}\label{thm:deg} If $S$ is a general K3 surface of degree 6 and genus 4 in $\mP^4$, then  the degree of $X_S$  as a subvariety of in $\mathbb G(2,4)\subset \mP^9$  is 152.
\end{thm}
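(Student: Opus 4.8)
The plan is to compute $\deg X_S$ inside the Chow ring of $\mathbb{G}(2,4)$, reducing the statement to one genuinely enumerative number. Since $X_S\subset\mathbb{G}(2,4)\subset\mP^9$ is a threefold, $\deg X_S=\int_{X_S}\sigma_1^{3}$, where $\sigma_1$ is the Plücker hyperplane class, i.e.\ the Schubert class of planes meeting a fixed line. In $H^{6}(\mathbb{G}(2,4))$ the Schubert classes $\sigma_{2,1}$ and $\sigma_{1,1,1}$ form a basis; both are self--dual, $\sigma_{2,1}\cdot\sigma_{1,1,1}=0$, and $\sigma_1^{3}=2\sigma_{2,1}+\sigma_{1,1,1}$. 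Writing $[X_S]=a\,\sigma_{2,1}+b\,\sigma_{1,1,1}$ we get $\deg X_S=2a+b$, so it is enough to compute $b=[X_S]\cdot\sigma_{1,1,1}$ and $a=[X_S]\cdot\sigma_{2,1}$.

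The number $b$ is immediate: represent $\sigma_{1,1,1}$ by the planes contained in a general hyperplane $H$, so $b$ is the number of planes of $X_S$ lying in $H$. For $S$ general, $C=S\cap H$ is a general canonical curve of genus $4$ in $H\cong\mP^3$, and a plane $\pi\subset H$ lies in $X_S$ precisely when $\pi\cap C=2Z$ for an effective divisor $Z$ of degree $3$ spanning $\pi$; as $\sO_C(1)=K_C$ this means $\sO_C(Z)$ is a theta--characteristic, necessarily odd (the even ones are ineffective for general $C$), and conversely each of the $2^{3}(2^{4}-1)=120$ odd theta--characteristics $\theta$ gives exactly one such plane $\pi=\langle Z_\theta\rangle$, the identity $\langle Z_\theta\rangle\cap C=2Z_\theta$ being forced by $h^{0}(K_C-Z_\theta)=h^{0}(Z_\theta)=1$. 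For general $H$ these $120$ points are distinct and the intersection is transverse, so $b=120$ (this also re--proves that $p_2\colon\mathcal X_S\to(\mP^4)^\vee$ has degree $120$).

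The number $a$ is the heart of the matter. Since $\sigma_{2,1}=\sigma_1\cdot\sigma_2$, we have $a=[X_S]\cdot\sigma_1\cdot\sigma_2=(X_S\cap\sigma_2)\cdot\sigma_1$, i.e.\ $a$ is the Plücker degree of the curve $\mathcal C:=X_S\cap\{\text{planes through a general point }p\}$. The planes of $\mathbb{G}(2,4)$ through $p$ form, via $\pi\mapsto\pi_p(\pi)$, a copy of $\mathbb{G}(1,3)$ by a morphism that preserves Plücker degree (it is the restriction of a linear map of the two Plücker spaces); and under this identification $\mathcal C$ is the curve of proper tritangent lines to the sextic surface $\bar S:=\pi_p(S)\subset\mP^3$, because a plane $\pi\ni p$ is tangent to $S$ at three distinct points $q_1,q_2,q_3$ exactly when $\pi_p(\pi)$ is a line tangent to $\bar S$ at the three smooth points $\pi_p(q_i)$ --- and such a $\pi$ automatically lies in $X_S$, since then $\pi\cap S$ has length $\geq 6=\deg S$, hence equals the curvilinear scheme $2q_1+2q_2+2q_3$. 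Thus $a=\deg\mathcal T$, where $\mathcal T\subset\mP^3$ is the scroll swept by the proper tritangent lines of $\bar S$; and the theorem follows once $\deg\mathcal T=16$ is established, since then $\deg X_S=2\cdot 16+120=152$.

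It remains to compute $\deg\mathcal T$. First I would record the geometry of $f=\pi_p|_S\colon S\to\mP^3$: by the double point formula $[\mathbb D(f)]=f^{*}f_{*}[S]-c_1(N_f)=6H_S-4H_S=2H_S$ (using $K_S=0$ and $f^{*}\sO(1)=\sO_S(1)$), so the double curve $D\subset S$ lies in $|2H_S|$, the double curve of $\bar S$ has degree $\tfrac12(D\cdot H_S)=6$, and the standard formulas give its pinch and triple points. I would then compute $\deg\mathcal T$ on the \emph{smooth} surface $S$ rather than on the singular $\bar S$: put $P=\mP(\mathcal T_S)$, the $\mP^1$--bundle over $S$ whose fibre over $q$ is the pencil of planes through $\langle p,q\rangle$ contained in $\langle p,T_qS\rangle$, together with $g\colon P\to\mathbb{G}(1,3)\cong\{\text{planes through }p\}$, $(q,[v])\mapsto\langle p,q,v\rangle$. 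The image of $g$ is the divisor of planes through $p$ tangent to $S$ and $g$ is birational onto it, so $g_{*}[P]$ is a determined multiple of $\sigma_1$ (read off from the invariants of $S$), while the triple--point locus $m_3(g)\subset P$ maps $3{:}1$ onto the closure of $\mathcal C$; hence, by the triple--point formula (à la Kleiman), $\deg\mathcal T=\deg_{\mathrm{Pl}}\mathcal C=\tfrac13\,[m_3(g)]\cdot g^{*}\sigma_1$, which the computation should evaluate to $16$. The main obstacle is precisely this step: $g$ fails to be an immersion along the asymptotic/flex loci of $S$ (and, if one argues downstairs on $\bar S$, along its double curve --- this is why only the \emph{proper} tritangent lines are counted), so the multiple--point formulas must be applied with care and the excess contributions supported on those loci identified and subtracted. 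A possible alternative, in the spirit of Proposition~\ref{prop:fin}, is to degenerate $S$ to a reducible surface and count the limit of the tritangent scroll.
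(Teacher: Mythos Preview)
Your Schubert set--up is correct and is essentially the paper's: the paper writes $X_S=a\,\mathbf h+b\,\mathbf k$ with $\mathbf h=\sigma_{1,1,1}$ (planes in a fixed hyperplane) and $\mathbf k=\sigma_{2,1}$ (planes through a fixed point meeting a fixed plane in a line through that point), gets $a=120$ from odd theta--characteristics on a general hyperplane section, interprets $b$ as the degree of the scroll $\mathfrak Z_S$ of \emph{proper} tritangent lines to the projection $S'=\pi_p(S)\subset\mP^3$, and obtains $\deg X_S=120+2\deg(\mathfrak Z_S)$. Your reduction $\deg X_S=2a+b$ with $b=120$ and $a=\deg\mathcal T$ is the same statement in the dual basis, and your passage from planes through $p$ to lines in $\mP^3$ is exactly the paper's. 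So up to and including ``it remains to compute $\deg\mathcal T$'' you coincide with the paper.

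Where you diverge, and where the gap is, is the actual computation of $\deg\mathcal T=16$. The paper does \emph{not} use multiple--point theory on $g:\mP(\mathcal T_S)\to\mathbb G(1,3)$. Instead it specializes a \emph{general smooth} sextic $\tilde S\subset\mP^3$ (whose tritangent scroll has degree $624$, quoted from Markushevich) to $S'$ and analyzes how $\mathfrak Z_{\tilde S}$ breaks up: besides the proper tritangent scroll $\mathfrak Z_S$, one gets three ``fake'' contributions coming from the double curve $\Gamma$ of $S'$ --- bitangents meeting $\Gamma$ (degree $108$, multiplicity $2$), tangents secant to $\Gamma$ (degree $90$, multiplicity $4$), and trisecants of $\Gamma$ (two conics in $\mathbb G(1,3)$, multiplicity $8$). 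Each degree is computed by a separate classical correspondence/scroll argument, and the multiplicities by a local degeneration of a smooth quadric to two planes. Then $624=\deg(\mathfrak Z_S)+216+360+32$ gives $\deg(\mathfrak Z_S)=16$.

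Your proposed route via Kleiman's triple--point formula for $g$ is genuinely different and, if carried through, would be more intrinsic; but as you yourself note, $g$ is not an immersion along the asymptotic loci and the formula must be corrected for excess there, and you do not perform that correction. Nor do you carry out the degeneration alternative you mention. So the proposal is an accurate outline down to the last, hardest step, where it remains a sketch: the content that actually pins down the number $16$ --- either the excess analysis for the triple--point scheme of $g$, or the identification of the limit components $\mathfrak Z_i$ with their degrees and multiplicities --- is missing.
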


The proof consists of various steps that we will perform separately.

\subsection{Schubert cycle computations} The Chow ring of $\mathbb G(2,4)$ in dimension 3 is generated by two Schubert cycles, precisely
$$
\begin{aligned}
&{\bf  h}= \{ \text{planes contained in a fixed hyperplane}\},\cr
&{\bf  k}= \{ \text{planes intersecting a fixed plane in a line passing through a fixed point}\}.
\end{aligned}
$$
Hence in the Chow ring we have the relation
$$
X_S=a{\bf h}+b{\bf k}
$$
with $a,b$ suitable integers. One has
$$
{\bf h}^2={\bf k}^2=1,\quad {\bf h}\cdot {\bf k}=0.
$$
Hence 
$$
a=X_S\cdot {\bf h}=120
$$
because $X_S\cdot {\bf h}$  equals the number of tritangential planes to $S$ contained in a general hyperplane, and this is the number of odd theta--characteristics of the curve of genus 4 general hyperplane section of $S$. 

The number
$$
b=X_S\cdot {\bf k}
$$
is more complicated to compute, namely it is the number of (proper) \emph{tritangent lines}\footnote {A (proper) tritangent [resp. bitangent, resp. tangent]  line $\ell$ to a surface $\Sigma$ in $\mP^3$ is such that $\ell$ is not contained in $\Sigma$ and $\Sigma$ cuts out on $\ell$ a divisor of the form $2D+D'$, with $D, D'$ effective divisors, $D$ of degree 3 [resp. of degree 2, resp. of degree 1] whose support lies in the smooth locus of $\Sigma$.}  to the projection $S'$ of $S$ to a $\mP^3$ from a general point, intersecting a general line of this $\mP^3$.  
 The closure of the set of proper  tritangent lines to $S'$ is a ruled surface $\mathfrak Z_S$  in $\mP^3$ and $b$ is the degree of $\mathfrak Z_S$. 

Let  $ H_\mathbb G$ be the hyperplane class of $\mathbb G(2,4)$, that is represented by the Schubert cycle
$$
H_\mathbb G=\{ \text{planes intersecting a fixed line} \}.
$$
One has clearly $H_\mathbb G^3\cdot {\bf h}=1$ and $H_\mathbb G^3\cdot {\bf k}=2$. Hence:

\begin{lem}\label{lem:partial} One has
$$\deg(X_S)=120+2\deg (\mathfrak Z_S).$$
\end{lem}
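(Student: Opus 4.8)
Essentially all the ingredients are already in place; the plan is to assemble them while keeping careful track of the intersection numbers in $\mathbb{G}(2,4)$. By Theorem \ref{thm:irr} the variety $X_S$ is an irreducible threefold in $\mathbb{G}(2,4)$, so its fundamental class lies in the part of the Chow ring of $\mathbb{G}(2,4)$ in dimension $3$, which is freely generated by ${\bf h}$ and ${\bf k}$; hence we may write $[X_S]=a{\bf h}+b{\bf k}$. By definition the degree of $X_S$ in the Pl\"ucker embedding $\mathbb{G}(2,4)\subset\mP^9$ is $H_{\mathbb G}^{3}\cdot[X_S]$, so the computation reduces to finding the coefficients $a$, $b$ together with the two numbers $H_{\mathbb G}^{3}\cdot{\bf h}$ and $H_{\mathbb G}^{3}\cdot{\bf k}$.

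For the Schubert calculus: identifying ${\bf h}$ and ${\bf k}$ with the two basic codimension-$3$ Schubert classes of $\mathbb{G}(2,4)$, Pieri's formula gives $H_{\mathbb G}^{3}={\bf h}+2{\bf k}$; combined with ${\bf h}^{2}={\bf k}^{2}=1$ and ${\bf h}\cdot{\bf k}=0$ this yields $H_{\mathbb G}^{3}\cdot{\bf h}=1$ and $H_{\mathbb G}^{3}\cdot{\bf k}=2$, as already recorded above.

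Next I would compute $a$ and $b$ geometrically. Intersecting $X_S$ with ${\bf h}$ counts the tritangential planes to $S$ contained in a general hyperplane $H$; these are exactly the planes spanned by the effective odd theta-characteristics of the smooth canonical genus $4$ curve $C=S\cap H$, and since by the generality of $S$ the curve $C$ is a general genus $4$ curve, their number is the number $2^{g-1}(2^{g}-1)=120$ of odd theta-characteristics in genus $g=4$, so $a=120$. Intersecting $X_S$ with ${\bf k}$ counts the tritangential planes $\pi$ to $S$ that pass through a general point $q\in\mP^4$ and meet a general plane $\Pi$ in a line. Projection from $q$ sends such a $\pi$ to a line $\ell_\pi\subset\mP^3$; since $q$ is general we may assume $q\notin S$, that the projection is birational onto the image surface $S'\subset\mP^3$, and that the three tangency points of $\pi$ with $S$ project to three distinct smooth points of $S'$; then $\pi$ is tritangential to $S$ precisely when $\ell_\pi$ is a proper tritangent line to $S'$, and the condition ``$\pi\cap\Pi$ is a line through $q$'' becomes ``$\ell_\pi$ meets the general line that is the image of $\Pi$''. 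Hence $b$ equals the number of proper tritangent lines to $S'$ meeting a general line of $\mP^3$, that is, the degree of the scroll $\mathfrak Z_S$ swept out by these lines.

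Putting everything together,
$$
\deg(X_S)=H_{\mathbb G}^{3}\cdot[X_S]=a\,(H_{\mathbb G}^{3}\cdot{\bf h})+b\,(H_{\mathbb G}^{3}\cdot{\bf k})=120+2\deg(\mathfrak Z_S),
$$
which is the assertion. The one point that is not pure formalism is the identification $b=\deg(\mathfrak Z_S)$: one must check that the correspondence between tritangential planes through $q$ and proper tritangent lines to $S'$ really is a bijection on general members, i.e. that it is not broken by a degeneration under the projection (say, a tangency point of $\pi$ with $S$ landing on the double curve of $S'$), and this is guaranteed by the genericity of $q$ and of the cycle representing ${\bf k}$ via Kleiman transversality. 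The genuinely substantial work, namely the actual evaluation of $\deg(\mathfrak Z_S)$, is left to the subsequent steps in the proof of Theorem \ref{thm:deg}.
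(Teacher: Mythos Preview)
Your proof is correct and follows essentially the same route as the paper: both decompose $[X_S]=a{\bf h}+b{\bf k}$, identify $a=120$ via odd theta--characteristics on a general hyperplane section, interpret $b$ as $\deg(\mathfrak Z_S)$ via projection from the fixed point, and combine this with $H_{\mathbb G}^{3}\cdot{\bf h}=1$, $H_{\mathbb G}^{3}\cdot{\bf k}=2$. You supply slightly more detail (invoking Pieri explicitly for $H_{\mathbb G}^{3}={\bf h}+2{\bf k}$ and Kleiman transversality for the bijection), but the argument is the paper's own.
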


\subsection{The degree of $\mathfrak Z_S$}

To finish the proof of Theorem \ref {thm:deg} we need to compute $\deg (\mathfrak Z_S)$. Before doing that we need the following preliminary information, which can be deduced from \cite  [Prop. 3.9]{M}:

\begin{prop}\label{prop:sext} If $\tilde S\subset \mP^3$ is a general sextic surface, the scroll of tritangent lines to $\tilde S$ has degree $624$.
\end{prop}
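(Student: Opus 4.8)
Proof proposal for Proposition \ref{prop:sext} (reconstructed from the statement, since the author cites \cite{M}).

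The plan is to deduce the degree of the scroll of tritangent lines to a general sextic $\tilde S\subset\mP^3$ from the general enumerative formula for the degree of the scroll of tritangent lines to a smooth surface of degree $d$ in $\mP^3$, specialised to $d=6$. First I would recall the set-up: a line $\ell\subset\mP^3$ is tritangent to $\tilde S$ precisely when $\tilde S$ cuts out on $\ell$ a divisor of the form $2(p_1+p_2+p_3)$ with the $p_i$ smooth points of $\tilde S$; equivalently, the restriction map $H^0(\mP^3,\sO(6))\to H^0(\ell,\sO(6))$ sees a section vanishing doubly at three points. The natural parameter space is the Grassmannian $\mG(1,3)$ of lines, and one wants the locus in $\mG(1,3)$ of tritangent lines, which is a curve; its image under the incidence correspondence sweeps out the scroll $\mathfrak Z$, and $\deg\mathfrak Z$ equals the number of tritangent lines meeting a general line of $\mP^3$, i.e. the intersection of the tritangent curve with the Schubert cycle $\sigma_1$ in $\mG(1,3)$.

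The key computational device is a jet-bundle / Chern-class argument on the flag variety of pointed lines, exactly parallel to Proposition \ref{prop:exp}. Concretely, over the incidence variety $F=\{(p,\ell):p\in\ell\}\subset\mP^3\times\mG(1,3)$, with projections $u:F\to\mP^3$ and $v:F\to\mG(1,3)$, one forms the relative second jet bundle $J^2$ of $u^*\sO_{\mP^3}(6)$ along the fibres of $v$ (these fibres are the lines $\ell\cong\mP^1$); it has rank $3$. The evaluation map $H^0(\mP^3,\sO(6))\otimes\sO_F\to J^2$ fails to be surjective exactly at points $(p,\ell)$ where some sextic meets $\ell$ with multiplicity $\geq 3$ at $p$. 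Pushing this degeneracy condition forward appropriately — tritangency at three distinct points requires the triple-contact locus in $F$ to meet each fibre of $v$ in three points — one expresses the class of the tritangent curve in $\mG(1,3)$ via a Thom–Porteous / triple-point formula. Then one caps with $\sigma_1$ to extract $\deg\mathfrak Z$. I would organise this as: (1) identify the relevant Chern classes of $\Omega_{F/\mG}$, $u^*\sO(6)$ and of $\mG(1,3)$; (2) write down the degeneracy class for simple triple contact (a single-point condition in $F$) as $c_{\rm top}$ of the appropriate jet bundle; (3) pass from "one triple point" to "three triple points on the same line" by the standard symmetric-product / Schur-polynomial manipulation that turns a fibrewise condition into a class on the base, and (4) evaluate against $\sigma_1$ to obtain the number $624$.

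The main obstacle I anticipate is step (3): correctly accounting for the passage from a pointwise triple-contact condition on $F$ to the locus of lines carrying \emph{three} such points, without over- or under-counting, and making sure the three contact points are forced to be distinct and to lie in the smooth locus of $\tilde S$ (for a general sextic, genericity disposes of the degenerate incidences, but the bookkeeping must reflect this). This is precisely the kind of symmetric-function computation that underlies the classical formulas for multisecant and multitangent loci; here one needs the analogue of the "3 triple points" count, which is more delicate than the single triple-point count of Proposition \ref{prop:exp}. A cleaner alternative, which is presumably what \cite{M} does, is to invoke an already-tabulated formula for the degree of the tritangent scroll of a degree-$d$ surface in $\mP^3$ as a polynomial in $d$ (obtained once and for all by the jet-bundle method above), and simply substitute $d=6$; I would verify the consistency of that substitution against the known low-degree cases (e.g. the bitangent/tritangent data for cubics and quartics) as a sanity check before concluding $\deg\mathfrak Z_{\tilde S}=624$.
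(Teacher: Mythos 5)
The paper does not actually prove this proposition: it simply cites \cite[Prop.~3.9]{M} and says the statement ``can be deduced from'' there. So your ``cleaner alternative'' --- quote an already-established formula for the tritangent scroll of a general degree-$d$ surface and substitute $d=6$ --- is exactly the route the authors take, and in that form your proposal matches the paper.

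Your primary proposal, however, does not stand on its own, for two reasons. First, the setup is off: tritangency means the divisor $\tilde S\cap\ell$ has the form $2(p_1+p_2+p_3)$, i.e.\ contact of order exactly $2$ at each of three points, so the relevant fibrewise degeneracy on $F=\{(p,\ell):p\in\ell\}$ is the vanishing of the section of the \emph{first} relative jet bundle $J^1_{F/\mG}(u^*\sO(6))$ (rank $2$) determined by the fixed equation of $\tilde S$; its zero locus is the threefold $T\subset F$ of tangent directions. Your $J^2$-condition (``multiplicity $\geq 3$ at $p$'') instead cuts out the locus of inflectional tangents, and asking that locus to meet a fibre of $v$ in three points would count lines with three flexes, not tritangent lines. (Also, the evaluation map from the full $84$-dimensional space $H^0(\mP^3,\sO(6))$ to a rank-$3$ jet bundle is surjective everywhere; the degeneracy only appears once you fix $\tilde S$ and take the induced section.) Second, and more seriously, the step you yourself flag as the obstacle --- converting ``three points of $T$ over the same $\ell$'' into a cycle class on $\mG(1,3)$, i.e.\ the triple-point formula for the generically finite map $T\to\mG(1,3)$ onto the tangent-line hypersurface, with the excess contributions from flexes and coincident contact points subtracted off --- is precisely where the whole content of the computation lives, and it is left unexecuted; the number $624$ is never derived. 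As written, the argument is a plan for a proof rather than a proof, and the only complete path to the statement it offers is the citation route that the paper itself uses.
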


Next we can prove the following:

\begin{prop} \label{prop:sextic} Let  $S$ be a general K3 surface of degree 6 and genus 4 in $\mP^4$. Then $\deg (\mathfrak Z_S)=16$. 
\end{prop}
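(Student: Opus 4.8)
The plan is to degenerate the K3 surface $S \subset \mP^4$ to a reducible variety and to compute $\deg(\mathfrak Z_S)$ by analyzing how the scroll of tritangent lines behaves under projection and degeneration. Concretely, I would project $S$ from a general point $p \in \mP^4$ to obtain a surface $S' \subset \mP^3$, which is a degree $6$ surface with ordinary double points along the double curve arising from the projection (the secant lines of $S$ through $p$). The key identity is Lemma~\ref{lem:partial}: it remains to find $\deg(\mathfrak Z_S)$, the degree of the closure of the locus of proper tritangent lines to $S'$. My strategy is to relate $\deg(\mathfrak Z_S)$ to the degree $624$ of the scroll of tritangent lines to a \emph{general} sextic surface in $\mP^3$ (Proposition~\ref{prop:sext}), by degenerating a general sextic surface to (or comparing it with) the projected K3 surface $S'$, keeping careful track of the excess contributions coming from the double curve of $S'$.

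First I would set up the enumerative framework: a general sextic $\tilde S \subset \mP^3$ has a $2$-dimensional family of tritangent lines sweeping out a scroll of degree $624$. Degenerating $\tilde S$ to the projection $S'$ of the K3, the double curve $D \subset S'$ (a curve of degree equal to the number of secants of $S$ through a general point, which can be computed by the double-point formula for $S \subset \mP^4$) carries ``improper'' tangency behavior: lines meeting $D$ acquire spurious intersection multiplicities with $S'$. The honest tritangent scroll $\mathfrak Z_S$ is the residual piece after subtracting the loci of lines that are tritangent only because they pass through points of $D$ or are otherwise trapped by the singular locus. So I would write
\begin{equation*}
624 = \deg(\mathfrak Z_S) + (\text{correction terms supported on the double curve } D),
\end{equation*}
and identify each correction term: e.g.\ lines through two points of $D$ and tangent at one smooth point, lines through one point of $D$ and tangent at two smooth points, lines trisecant to $D$, bitangent lines to $D$, etc. Each such term is an enumerative number on $D$ (or on the pair $(S',D)$) computable from the degree and genus of $D$ and standard formulas for secant/tangent loci, and these genus/degree data follow from $K_S = 0$, $H_S^2 = 6$, $e(S) = 24$ together with the structure of the projection.

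Alternatively — and this may be cleaner — I would bypass the sextic comparison and compute $\deg(\mathfrak Z_S)$ intrinsically on $\mP^4$ as a Schubert-theoretic/Chern-class count, in the spirit of Proposition~\ref{prop:exp}: the tritangent lines to $S'$ correspond to planes through $p$ that are tritangent to $S$, i.e.\ to points of $X_S$ meeting the Schubert cycle $\mathbf k$; so $\deg(\mathfrak Z_S) = X_S \cdot \mathbf k$ is a residual intersection number that one can attack by realizing $X_S$ as (part of) the degeneracy locus of an evaluation map of jet bundles twisted appropriately, then computing the relevant Chern class and subtracting the contribution of the $210$ special planes (which, by Theorem~\ref{thm:local}, sit on $X_S$ with multiplicity $2$) and of the indeterminacy/exceptional locus. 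Whichever route, the output should be the clean value $\deg(\mathfrak Z_S) = 16$, which then feeds into Lemma~\ref{lem:partial} to give $\deg(X_S) = 120 + 2\cdot 16 = 152$.

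The main obstacle, I expect, is the bookkeeping of the correction terms in the degeneration from the general sextic to the projected K3: one must be sure that \emph{every} family of lines that contributes to the sextic's tritangent scroll but degenerates into the double locus of $S'$ is accounted for exactly once and with the correct multiplicity, and that no component of $\mathfrak Z_S$ itself is lost in the limit. This requires a delicate local analysis near the double curve $D$ of $S'$, analogous to the parity/limit arguments used in Proposition~\ref{prop:noint} and Proposition~\ref{prop:lim}, and a correct count of the numerical invariants of $D$ (its degree and arithmetic genus) via the double-point formula and adjunction on $S$. Handling the possible pinch points or triple points of the projection — where the generic transversality of the degeneration fails — is the part most likely to need a separate careful argument.
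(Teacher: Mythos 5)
Your first route is the same one the paper takes: project $S$ from a general point to get $S'\subset\mP^3$, a sextic with a double curve $\Gamma$ (a degree $6$, genus $4$ canonical curve carrying $12$ pinch points), specialize a general sextic $\tilde S$ to $S'$, and write $624=\deg(\mathfrak Z_S)+\sum_i m_i\deg(\mathfrak Z_i)$ with correction scrolls supported on lines meeting $\Gamma$. Your tentative list of correction loci is close to the correct one: the three scrolls that actually occur are the proper bitangents to $S'$ meeting $\Gamma$, the proper tangents to $S'$ that are secant to $\Gamma$, and the trisecants to $\Gamma$ (the latter being just the two rulings of the unique quadric through $\Gamma$). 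Note that ``bitangent lines to $\Gamma$'' do not contribute, since they form only a curve in $\mathbb G(1,3)$, not a scroll.

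The genuine gap is that the proposal stops exactly where the proof begins: essentially all of the content of the argument lies in determining the degrees and multiplicities of the correction terms, namely $\deg(\mathfrak Z_1)=108$, $\deg(\mathfrak Z_2)=90$, $\deg(\mathfrak Z_3)=4$ with multiplicities $m_1=2$, $m_2=4$, $m_3=8$, whence $624-216-360-32=16$. None of these reduces to a ``standard formula'' on $(S',\Gamma)$. For instance, $\deg(\mathfrak Z_1)$ is obtained by intersecting the degree-$132$ bitangent surface ${\rm Bit}(S')\subset\mathbb G(1,3)$ with the Cayley hypersurface of $\Gamma$ and subtracting two residual curves: one computed by a correspondence-of-indices argument on a general line using the Salmon--Cayley scroll of degree $180$, the other (the tangent developable of $\Gamma$) shown to enter with multiplicity $4$ by degenerating tangent planes of $S'$ to points of $\Gamma$. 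Likewise $\deg(\mathfrak Z_2)$ requires intersection theory on a desingularization of the scroll of secants to $\Gamma$ meeting a general line, and the multiplicities $m_i=2^i$ come from a local analysis degenerating a smooth quadric to a pair of planes. Your alternative Chern-class route on $\mathbb G(2,4)$ is likewise not carried out, and is not obviously viable since $X_S$ is not presented as a global degeneracy locus on a smooth ambient space. As written, the proposal is a correct plan that matches the paper's strategy but does not establish $\deg(\mathfrak Z_S)=16$.
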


\begin{proof} Let $S'$ be the projection of $S$ from a general point to $\mP^3$. Note that $S'$ is singular, having a canonical curve $\Gamma$ of degree 6 and genus 4 of double points that are generically normal crossings with finitely many pinch points. The pull back  $\tilde \Gamma$ of $\Gamma$ on $S$ is a smooth curve in $|\mathcal O_S(2)|$ and the projection map $\tilde \Gamma\longrightarrow \Gamma$ is a double cover. Since $\tilde \Gamma$ has genus $13$, by Riemann--Hurwitz formula $\tilde \Gamma\longrightarrow \Gamma$  has 12 branch points, hence there are exactly 12 pinch points of $S'$ along $\Gamma$. 

Recall that $\mathfrak Z_S$ is the scroll of (proper) tritangent lines to $S'$. Consider now a general surface $\tilde S$ of degree 6 in $\mP^3$ and consider the general specialization of $\tilde S$ to $S'$. In this specialization the scroll $\mathfrak Z_{\tilde S}$ of tritangent lines to $\tilde S$ specializes to the union of the following surfaces:\\ \begin{inparaenum}
\item [(i)] the scroll $\mathfrak Z_S$ of (proper) tritangent lines to $S'$;\\
\item [(ii)] the scroll $\mathfrak Z_1$ of (proper) bitangent lines  to $S'$ intersecting $\Gamma$ to be counted with a multiplicity $m_1$;\\
\item [(ii)] the scroll $\mathfrak Z_2$ of (proper) tangent lines  to $S'$ that are also secant to  $\Gamma$ to be counted with a multiplicity $m_2$;\\
\item [(iii)] the scroll $\mathfrak Z_3$ of trisecant lines to $\Gamma$ to be counted with a multiplicity $m_3$.
\end{inparaenum}

So we  have
\begin{equation}\label{eq:summa}
624=\deg(\mathfrak Z_{\tilde S})=\deg (\mathfrak Z_S)+\sum_{i=1}^3 m_i\deg (\mathfrak Z_i).
\end{equation}
We will compute $m_i$ and $\deg (\mathfrak Z_i)$ for all $1\leq i\leq 3$, and this will enable us to compute $\deg (\mathfrak Z_S)$. \medskip

\noindent {\bf Step 1}: computation of $\deg (\mathfrak Z_1)$ and of the multiplicity $m_1$.\medskip

The closure ${\rm Bit}(S')$ of the set of (proper) bitangent lines to $S'$ can be seen as a surface in $\mathbb G(1,3)\subset \mP^5$. The set ${\rm Int}(\Gamma)$ of lines intersecting $\Gamma$ can be seen as a hypersurface on $\mathbb G(1,3)$, namely the \emph{Cayley variety} of $\Gamma$,  and it is well known that ${\rm Int}(\Gamma)$ is cut out on $\mathbb G(1,3)$ by a hypersurface of degree $6=\deg (\Gamma)$ of $\mP^5$. Then $\mathfrak Z_1$ is contained in the intersection of ${\rm Int}(\Gamma)$ with ${\rm Bit}(S')$. More precisely we have
\begin{equation}\label{eq:for}
{\rm Int}(\Gamma) \cdot {\rm Bit}(S')=\mathfrak Z_1+\mathfrak D_1+\mathfrak D_2
\end{equation}
where:\\
\begin{inparaenum} 
\item [$\bullet$]  $\mathfrak D_1$ is a perhaps non--reduced curve supported on the curve $D_1$ parametrizing the  lines that intersect $\Gamma$ at some point $x$, are tangent to a branch of $S'$ passing through $x$ and are also a proper tangent to $S'$ at some point $y\neq x$;\\
\item [$\bullet$]  $\mathfrak D_2$ is a perhaps non--reduced curve supported on the irreducible curve $D_2$ parametrizing the tangent lines to $\Gamma$.\\
\end{inparaenum}

We will look at $D_1$ and $D_2$ as a scrolls in $\mP^3$.

\begin{cla}\label{cla:b} The surface ${\rm Bit}(S')\subset \mP^5$ has degree 132, hence $\deg ({\rm Int}(\Gamma) \cdot {\rm Bit}(S'))= 792$.
\end{cla}

\begin{proof}[Proof of Claim \ref {cla:b}] The Chow ring of $\mathbb G(1,3)$ in dimension 2 is generated by the two Schubert cycles:\\ \begin{inparaenum}
\item [$\bullet$] $P_1$ the set of lines passing through a fixed point of $\mP^3$;\\
\item [$\bullet$] $P_2$ the set of lines contained in a fixed plane of $\mP^3$;\\
\end{inparaenum}
and $P_i^2=1$ for $1\leq i\leq 2$ whereas $P_1\cdot P_2=0$.

We have the following relation in the Chow ring
$$
{\rm Bit}(S')= c P_1+d P_2
$$
with  $c,d$ integers, that we now will compute. The integer  $c$  is the number of (proper) bitangent lines to $S'$ that pass through a general point of $\mP^3$. There is a formula that computes this number (see \cite [p. 182]{Enr} and  \cite {Iv} for a modern version), that gives $c=60$.  The integer  $d$  is the number of (proper) bitangent lines to $S'$ contained in a general plane of $\mP^3$, i.e., the number of (proper) bitangent lines to a general plane section of $S'$ that is a plane sextic curve with 6 nodes. This number is provided by Pl\"ucker formulae and it turns out that  $d=72$. Since $P_1$ and $P_2$ are two planes in $\mathbb G(1,3)$, one has $\deg ({\rm Bit}(S))=60+72=132$ as wanted. Then
$\deg ({\rm Int}(\Gamma) \cdot {\rm Bit}(S'))= 6\cdot 132=792$. 
\end{proof}

\begin{cla}\label{cla:z1} One has $\deg(D_1)=612$. 
\end{cla}

\begin{proof}[Proof of Claim \ref {cla:z1}] Let $\ell$ be a general line of $\mP^3$. The degree of $D_1$ is the number of intersection points of $\ell$ with the scroll $D_1$. To compute it we proceed as follows. 

Consider the correspondence $\omega$ on $\ell$ defined in the following way. 
Take a general point $p\in \ell$. Consider all lines $r$ passing through $p$, incident to $\Gamma$ at a point $x_r$ and such that $r$ is tangent to a branch of $S'$ passing through $x_r$. Such a line intersects $S'$ off $x_r$ at three further points $x_{r,1}, x_{r,2}, x_{r,3}$. Take the tangent planes $\pi_{r,1}, \pi_{r,2}, \pi_{r,3}$ to $S'$ at $x_{r,1}, x_{r,2}, x_{r,3}$ respectively, and consider the points $p_{r,1}, p_{r,2}, p_{r,3}$ intersection of $\ell$ with $\pi_{r,1}, \pi_{r,2}, \pi_{r,3}$ respectively. The correspondence $\omega$ maps $p$ to the divisor
$$
E_p=\sum_{r} p_{r,1}+ p_{r,2}+ p_{r,3}
$$
where the sum is taken over all lines $r$ passing through $p$, incident to $\Gamma$ at a point $x_r$ and such that $r$ is tangent to a branch of $S'$ passing through $x_r$. 

Let $[e,f]$  be the \emph{indices} of $\omega$, i.e., $e=\deg(E_p)$  and $f$   is the degree of the divisor associated to a general point of $\ell$ in the inverse correspondence $\omega^{-1}$. A point 
$z\in \ell$ sits in $D_1$ if and only if it is a \emph{coincidence point} of $\omega$, i.e., if and only if $\omega(z)=\omega^{-1}(z)$. The number of coincidence points of a correspondence of indices $[i,j]$ is clearly $i+j$, hence we have
\begin{equation}\label{eq:z1}
\deg(D_1)= e+f. 
\end{equation}
So we have to compute  $e$ and $f$.

As for the computation of  $e$  we argue as follows. Take $p\in \ell$ as above. Consider the polar surface $S'_p$ of $S'$ with respect to $p$, which, by the generality of $\ell$ and $p$ is a general polar of $S'$. Then $S'_p$ contains $\Gamma$ with multiplicity 1, and intersects $S'$ in a further curve $C_p$ of degree 18 that is well known to pass simply  through the 12 pinch points of $S'$ along $\Gamma$. The curve $C_p$ is the closure of the set of smooth points $z$ of $S'$ such that the tangent plane to $S'$ at $z$ contains $p$.
On $S$ the curve $C_p$ pulls back to a curve in $|\mathcal O_S(3)|$ and $\Gamma$ to a curve in $|\mathcal O_S(2)|$. So they intersect in $36$ points. Hence $C_p$ and $\Gamma$ intersect in 24 points off the 12 pinch points of $S'$ on $\Gamma$. These points $x$ are such that the line $r=\langle x,p\rangle$ is tangent to a branch of $S'$ passing through $x$. So this proves that 
\begin{equation}\label{eq:a}
e =24\cdot 3=72. 
\end{equation}

As for the computation of  $f$  we proceed as follows. Take a general point $q\in \ell$ and consider the curve $C_q$ as above. Consider then the scroll $T$ of all lines intersecting $\ell$, $\Gamma$ and $C_q$. Some properties of $T$ are known by \emph{Salmon--Cayley's theorem} (see \cite [p. 506--509]{Cam}): $T$ has degree 180, $\ell$ has multiplicity 72, $\Gamma$ has multiplicity 18 and $C_q$ has multiplicity 6 for $T$.  

Let us consider the minimal desingularization $g: T'\longrightarrow T$ of $T$ and let $\tilde\ell$, $\tilde\Gamma$, $\tilde C_q$ be the pull--back of $\ell, \Gamma$ and $C_q$ on $T'$. It is clear that $\tilde\ell$, $\tilde\Gamma$, $\tilde C_q$ are unisecant to the lines of the ruling, whose numerical class we denote by $F$. We will denote by $H_T$ the pull back to $T'$ of the hyperplane class of $T$. 

Let us look at the pull--back on $T'$ of the intersection of $S'$ with $T$. We have
$$
6H_T\sim 2\tilde\Gamma+\tilde C_q+A
$$
where $\sim$ stays for linear equivalence and $A$ is a suitable curve. We have
$$
6=6H_T\cdot F=2\tilde\Gamma\cdot F+\tilde C_q\cdot F+A\cdot F=2+1+A\cdot F
$$
hence $A\cdot F=3$. Moreover, since $\tilde \ell \cdot \tilde\Gamma=\tilde \ell \cdot \tilde C_q=0$, one has
$$
A\cdot \tilde \ell=6H_T\cdot \tilde \ell=6\cdot 72=432.
$$
Furthermore we have
$$
\tilde\Gamma\equiv \tilde\ell+\alpha F
$$
where $\equiv$ stays for numerical equivalence and $\alpha$ is a suitable integer. Then
$$
6\cdot 18=H_T\cdot \tilde \Gamma= H_T\cdot \tilde \ell+\alpha=72+\alpha
$$
hence $\alpha=36$. So
$$
A\cdot \tilde \Gamma=A\cdot \tilde \ell+ 36(A\cdot F)= 432+ 3\cdot 36=540.
$$
Now notice that for the image $z$ on $\Gamma$ of each intersection point of $A$ with $\tilde \Gamma$, the line of the ruling of $T$ containing $z$ is tangent to a branch of $S'$ passing through $z$ and intersects $C_q$ in a point $y$ such that the tangent plane to $S'$ at $y$ passes through $q$. This proves that 
\begin{equation}\label{eq:b}
 f =540.
\end{equation}
The assertion follows by  \eqref {eq:z1},\eqref {eq:a} and \eqref {eq:b}.\end{proof}

\begin{cla}\label{cla:z1_0} One has $\deg(D_2)=18$. 
\end{cla}

\begin{proof}[Proof of Claim \ref {cla:z1_0}] This is an immediate consequence of Riemann--Hurwitz formula. \end{proof}

\begin{cla}\label{cla:z1_1} The curve $\mathfrak D_1$ is reduced, so it coincides with $D_1$. The curve  $\mathfrak D_2$ is $D_2$ counted with multiplicity 4.
\end{cla}

\begin{proof}[Proof of Claim \ref {cla:z1_1}] Let us take $x\in S'$ a general point and let $\pi_x$ be the tangent plane to $S'$ at $x$. The section of $S'$ with $\pi_x$ is a curve $C_x$ of degree 6 with six nodes, one at $x$ and six more at the intersection points of $\pi_x$ with $\Gamma$. Hence $C_x$ has geometric genus 3, and the proper bitangent lines to $S'$ passing through $x$ correspond to the $12$ ramification points of the $g^1_4$ pull--back  on the normalization $\tilde  C_x$ of $C_x$ of the series cut out  on $C_x$ by the lines of $\pi_x$ passing through $x$. 

Let us now see what happens when $x$ tends to a general point $\xi$ of $\Gamma$ on a branch $\sigma$ of $S'$ passing through $\xi$. The plane $\pi_x$ tends to the tangent plane $\pi_{\xi,\sigma}$ to the branch  $\sigma$ at $\xi$, that is also a tangent plane to $\Gamma$ at $\xi$. This plane cuts out on $S'$ a curve $C_{\xi,\sigma}$, still of geometric genus 3, that has an ordinary triple point at $\xi$ and four more nodes at the further intersection points of $\pi_{\xi,\sigma}$ with $\Gamma$. Notice that the triple point of $C_{\xi,\sigma}$ at $\xi$ has a distinguished branch $\gamma$ that is the intersection of $\pi_{\xi,\sigma}$ with the other branch $\sigma'$ of $S'$ at $\xi$. If we remove from the triple point at $\xi$ the branch $\gamma$, one is left with a node, that is cut out on the branch $\sigma$ by the tangent plane $\pi_{\xi,\sigma}$ at $\xi$. 

Now let us look at the limit, when $x$ tends to $\xi$ as indicated above,  of the $g^1_4$ on the normalization $\tilde  C_x$ of $C_x$ pull--back of the series cut out on $C_x$ by the lines of $\pi_x$ passing through $x$. The curve $\tilde  C_x$ flatly tends to the normalization $\tilde C_{\xi,\sigma}$ of $C_{\xi,\sigma}$. The $g^1_4$ on $\tilde  C_x$ tends to the $g^1_3$ pull--back on $\tilde C_{\xi,\sigma}$ of the linear series cut out on $C_{\xi,\sigma}$ by the lines of $\pi_{\xi,\sigma}$ passing through $\xi$, plus a fixed point, namely the point $y$ of $\tilde C_{\xi,\sigma}$ corresponding to the branch $\gamma$ of $C_{\xi,\sigma}$.  It is then easily seen (see, e.g.,  \cite[p. 179]{EC}) that the $12$ ramification points of the $g^1_4$ tend to the 10 ramification points of the $g^1_3$ on $\tilde C_{\xi,\sigma}$ plus the divisor $2y$, and this implies the assertion. Indeed it is clear from the above argument that $\mathfrak D_1$ is reduced. As for $\mathfrak D_2$, we see that the general point $\xi$ of $D_2$ counts in $\mathfrak D_2$ with multiplicity two for each of the two branches of $S'$ passing through $\xi$, and so it counts in $\mathfrak D_2$ with multiplicity 4. \end{proof}

\begin{cla}	\label{cla:zk11} One has $\deg(\mathfrak Z_1)=108$.
\end{cla} 

\begin{proof}[Proof of Claim \ref {cla:zk11}] The assertion follows by \eqref {eq:for} and by Claims \ref {cla:z1}, \ref {cla:z1_0} and \ref {cla:z1_1}. \end{proof}

Finally we have the:

\begin{cla}\label{cla:m1} One has $m_1=2$. \end{cla}

\begin{proof}[Proof of Claim \ref {cla:m1}] The problem is of a local nature and it boils down to the following. Let $\Sigma$ be a surface in $\mP^3$ with a double curve $D$ along which it has generically normal crossings. One has to compute the multiplicity of a line intersecting $\Sigma$ in a point of $D$ as a point of the  closure of the set of proper  tangent lines to $\Sigma$. Since the problem is local we can argue as follows. Consider a smooth quadric $Q$ in $\mP^3$ that specializes to the union of two distinct planes intersecting along a line $s$. The family of tangent lines to $Q$ is easily seen to be  the intersection of $\mathbb G(1,3)$ with a hypersurface of degree 2 in $\mP^5$.  Under the above specialization this specializes to the set of lines intersecting the line $s$, which is the intersection of $\mathbb G(1,3)$ with a hyperplane. Hence we see that  this has to be counted with multiplicity 2. The assertion follows.  \end{proof}\medskip

\noindent {\bf Step 2}: computation of $\deg (\mathfrak Z_2)$ and of the multiplicity $m_2$.\medskip

\begin{cla}\label{cla:z2} One has $\deg (\mathfrak Z_2)=90$.
\end{cla}

\begin{proof}[Proof of Claim \ref {cla:z2}]
The set of secant lines to $\Gamma$ can be considered as a surface $S(\Gamma)$ in $\mathbb G(1,3)\subset \mP^5$. The surface $S(\Gamma)$ is singular. Indeed $\Gamma$ is contained in a unique smooth quadric $Q$, that has  two rulings of lines, and the lines of the two rulings are 3--secant to $\Gamma$. Hence the two rulings in question map on $\mathbb G(1,3)$ to two irreducible conics that are sets of triple points for $S(\Gamma)$, and $S(\Gamma)$ is otherwise smooth. The normalization of $S(\Gamma)$ is smooth, it is isomorphic to the 2--fold symmetric product $\Gamma[2]$ of $\Gamma$ and we have the normalization morphism $\nu: \Gamma[2]\longrightarrow S(\Gamma)$. Let us denote by $H_{S(\Gamma)}$ the pull--back on $\Gamma[2]$ via $\nu$ of a   hyperplane section of $S(\Gamma)$. Let $r$ be a general line in $\mP^3$ and consider the $g^1_6$ cut out on $\Gamma$ by the planes containing $r$. Then  we can consider $H_{S(\Gamma)}$ as consisting of all divisors of degree 2 contained in a divisor of this $g^1_6$. It follows that
$$
H_{S(\Gamma)}^2=21, \quad p_a(H_{S(\Gamma)})=22
$$
(see \cite [Prop. (1.3)]{Cil}). 

Take now a general line $\ell$ in $\mP^3$ and consider the scroll $\Theta$ of lines that are secant to $\Gamma$ and intersect $\ell$. The surface $\Theta$ is singular.  In particular, it is immediate that $\ell$ has multiplicity 6  and $\Gamma$ has multiplicity 5 for $\Theta$. We will consider the minimal desingularization $h:\Theta'\longrightarrow \Theta$ of $\Theta$ and we will denote by $\tilde \ell$ and by $\tilde \Gamma$ the proper transform of $\ell$ and $\Gamma$ on $\Theta'$ via $h$. We will also denote by $F$ the numerical class of a line of the ruling of $\Theta'$. Then it is clear that
$$
F\cdot \tilde \ell=1, \quad F\cdot \tilde \Gamma=2. 
$$
Moreover, the ruling of $\Theta$ is birational to  $H_{S(\Gamma)}$, hence its genus is 22, so that
\begin{equation}\label{eq:12}
p_a(\tilde \ell)=22.
\end{equation}
Let $H_\Theta$ be the pull--back on $\Theta'$ of a general plane section of $\Theta$. It is clear that
$$
H_\Theta^2=H_{S(\Gamma)}^2=21.
$$
By taking the pull--back on $\Theta'$ of a general plane section of $\Theta$ containing $\ell$, we see that
$$
H_\Theta\equiv \tilde \ell+15F
$$
and therefore
$$
6=H_\Theta\cdot \tilde \ell=\tilde \ell^2+15
$$
hence 
$$
\tilde \ell^2=-9.
$$
One has 
$$
K_{\Theta'}\equiv -2\tilde \ell+\alpha F
$$
with $\alpha$ a suitable integer, and, by \eqref {eq:12}, we have
$$
42=\tilde \ell \cdot (\tilde \ell+K_{\Theta'})= \tilde \ell (-\tilde \ell +\alpha F)=9+\alpha
$$
so that $\alpha=33$.

Recall that $\Gamma$ sits on a unique quadric $Q$. By pulling--back to $\Theta'$ the intersection of $Q$ with $\Theta$, we see that
$$
2H_\Theta\equiv \tilde \Gamma+\beta F
$$
with $\beta$ a suitable integer. Intersecting with $H_\Theta$ we have
$$
42=2H_\Theta^2=H_\Theta\cdot \tilde \Gamma+\beta=30+\beta
$$
so that $\beta=12$. Hence
$$
60=2H_\Theta\cdot \tilde \Gamma= \tilde \Gamma^2+12\tilde \Gamma \cdot F=\tilde \Gamma^2+24
$$
thus 
$$
\tilde \Gamma^2=36.
$$
By taking the pull--back to $\Theta'$ of the intersection of $S'$ with $\Theta$, we have
\begin{equation}\label{eq:sum}
6H_\Theta\sim 2\tilde \Gamma+A
\end{equation}
where $A$ is a suitable curve. Intersecting with $F$ we see that $A\cdot F=2$.
Intersecting with $\tilde \Gamma$ we get
$$
180=6H_\Theta\cdot \tilde \Gamma=2\tilde \Gamma^2+A\cdot \tilde \Gamma=72+A\cdot \tilde \Gamma
$$
so that
$$
A\cdot \tilde \Gamma=108.
$$

Now
$$
2p_a(6H_\Theta)-2=6H_\Theta\cdot (6H_\Theta+K_{\Theta'})=6H_\Theta\cdot (6H_\Theta-2\tilde \ell+33F)=882
$$
so that
$$
p_a(6H_\Theta)=442.
$$
Similarly
$$
2p_a(2\tilde \Gamma)-2=2\tilde \Gamma\cdot (2\tilde \Gamma+K_{\Theta'})=2\tilde \Gamma\cdot (2\tilde \Gamma-2\tilde \ell+33F)=276
$$
so that
$$
p_a(2\tilde \Gamma)=139.
$$
From \eqref {eq:sum} we deduce
$$
442=p_a(6H_\Theta)=p_a(2\tilde \Gamma)+p_a(A)+2\tilde \Gamma\cdot A-1=139+p_a(A)+216-1
$$
so that
$$
p_a(A)=88.
$$

Recall now that we have the 2:1 morphism $A\longrightarrow \tilde \ell$ induced by the ruling of $\Theta'$. By Hurwitz formula we see that the lines of the ruling tangent to $A$ are exactly 90 and this is nothing else than the degree of $\mathfrak Z_2$.\end{proof}

\begin{cla}\label{cla:m2} One has $m_2=4$. 
\end{cla}

\begin{proof}[Proof of Claim \ref {cla:m2}] The problem being local, the assertion follows from the argument in the proof of Claim \ref {cla:m1}.\end{proof}\medskip

\noindent {\bf Step 3}: computation of $\deg (\mathfrak Z_3)$ and of the multiplicity $m_3$.\medskip

\begin{cla}\label{cla:z3} The image of $\mathfrak Z_3$ on $\mathbb G(1,3)$ consists of two disjoint conics. Moreover $m_3=8$.
\end{cla} 

\begin{proof}[Proof of Claim \ref {cla:z3}] Any trisecant line to $\Gamma$ is a line of one of the two rulings of the unique smooth quadric $Q$ containing $\Gamma$. The first assertion immediately follows. As for the second assertion, the problem being local, the assertion follows from the argument in the proof of Claim \ref {cla:m1}.\end{proof}\medskip

We are finally in a position to finish the proof of Proposition \ref {prop:sextic}. Indeed, it follows right away  from \eqref {eq:summa} and from Claims \ref {cla:zk11}, \ref {cla:m1}, \ref {cla:z2}, \ref {cla:m2}, \ref {cla:z3}. \end{proof}

Finally we have:

\begin{proof}[Proof of Theorem \ref {thm:deg}] It follows  from Proposition \ref {prop:sextic} and Lemma \ref {lem:partial}. 
\end{proof}

\end{document}